\newcommand{\revision}[1]{#1}
\newtheorem{thm}{Theorem}
\newtheorem{cor}[thm]{Corollary}
\newtheorem{lem}[thm]{Lemma}
\theoremstyle{definition}
\newtheorem{defn}[thm]{Definition}
\newcommand{\RR}{{\mathbb{R}}}
\newcommand{\R}{{\mathbb{R}}}
\newcommand{\bX}{\mathbf{X}}
\newcommand{\bY}{\mathbf{Y}}
\newcommand{\bZ}{\mathbf{Z}}
\newcommand{\UU}{\mathbb{U}}
\newcommand{\Fraisse}{Fra{\"\i}ss{\'e}\ }
\title{Fra\"iss\'e Limits for Relational Metric Structures}
\author{David Bryant$^1$}
\address{$^1$Department of Mathematics and Statistics, University of Otago, New Zealand. {\tt david.bryant@otago.ac.nz}}
\author{Andr\'{e} Nies$^2$}
\address{$^2$Department of Computer Science, University of Auckland, New Zealand. {\tt   andre@cs.auckland.ac.nz}. {Supported by the Marsden fund of New Zealand.}}
\author{Paul Tupper$^3$}
\address{$^3$Department of Mathematics, Simon Fraser University, Burnaby, British Columbia, Canada. {\tt pft3@sfu.ca}}
\date{\today}                                           
\begin{document}

\begin{abstract}
The general theory developed by Ben Yaacov for metric structures provides \Fraisse limits which are approximately ultrahomogeneous. We show here that this result can be strengthened in the case of relational metric structures. We give an extra condition that guarantees exact ultrahomogenous limits. The condition is quite general. We apply it to stochastic processes, the class of diversities, and its subclass of $L_1$ diversities. 
\end{abstract}

\maketitle

\section{Introduction}
The concepts of   homogeneity and universality pervade many areas of mathematics.
 Both concepts play a central role in the Fra{\"\i}ss{\'e}
 limit~\cite{fraisse1953certaines} of a   class of finite structures with the amalgamation property. For instance,   the Rado (or random)  graph~\cite{rado1964universal}, which   is the Fra{\"\i}ss{\'e}
 limit of the class of undirected finite graphs, is   universal for   the class of countable graphs, and  ultrahomogeneous in the sense  that its isomorphic finite subgraphs are automorphic in the graph. The conjunction of these two properties makes the Rado graph unique up to isomorphism.  This behaviour is entirely typical for   Fra{\"\i}ss{\'e}
 limits.

For structures in the classical sense,  countability is essential  to ensure  this uniqueness. However, we are mainly  interested in  the setting of a complete metric space $X$ with additional structure defined on it. In this context, algebraic embeddings turn into isometric embeddings preserving the structure; countability  turns into  separability, while the spaces themselves  are usually uncountable. The Urysohn metric space $\UU$ is  analogous to the Rado graph; it was  first described by Urysohn~\cite{urysohn1927espace} in 1927,  curiously,   26 years  before the introduction of Fra{\"\i}ss{\'e} limits. The space~$\UU$ is the  completion of the \Fraisse limit of finite metric spaces with rational distances. 
It is determined by being universal for   the separable metric spaces, and  ultrahomogeneous in the sense  that its isometric finite subspaces  are automorphic in \revision{the space}.

Ben Yaacov   \cite{yaacov2008model} has developed a \Fraisse theory for metric structures that is analogous to the classical \Fraisse theory. Under his theory a class of finitely generated structures is confirmed to satisfy some conditions (analogues of the HP, JEP, and AP of the classical theory along with others) and then it is known that the class has a \Fraisse limit.  Under this framework the Urysohn space is the \Fraisse limit of the finite metric spaces, the Urysohn sphere is the limit of finite metric spaces bounded by one, and 
$\ell_2$ is the \Fraisse limit of the finite-dimensional Hilbert spaces. In each of these cases the limit is ultrahomogeneous, but a weaker result is actually established by Ben Yaacov's \Fraisse theory: approximate ultrahomogeneity, meaning that finite partial isomorphism can only be extended to all of the space up to some error. This is necessary for any theory that includes the Guararij space as the limit   
 of the finite-dimensional Banach space, since this limit is only approximately ultrahomogeneous.
 
Here we will show that if we restrict ourselves to {\emph{relational}} metric structures, we are able to add another condition to Ben Yaacov's theory to obtain limits which are exactly ultrahomogeneous, rather than just approximately ultrahomogeneous. The extra condition, which we call the bounded Amalgamation Property (bAP), requires that one-point amalgamations come with a bound on how far apart the two amalgamated points are in the amalgamation. For relational metric structures, Ben Yaacov's conditions along with bAP guarantee strictly ultrahomogeneous \Fraisse limits. 
 
We provide three examples of our result. The first is to construct a universal ultrahomogeneous stochastic process taking values in any finite set. The second is to provide a short proof of the existence and uniqueness of the universal ultrahomogeneous diversity. Diversities were introduced in \cite{Bryant12} as a generalization of metric spaces in which a non-negative value is assigned to all finite sets of points, and not just to pairs. We have established a construction of this diversity by independent means in previous work~\cite{bryant2018}; here it is derived easily from a more general theory.  Finally, we investigate the existence of  universal ultrahomogeneous $L_1$ metrics and diversities. $L_1$ diversities, in analogy with $L_1$ metric spaces, are diversities that can be embedded in the function space $L_1$. Both $L_1$ metrics and diversities are important in applications including combinatorial optimization \cite{Deza97,Bryant14} and phylogenetics \cite{BandeltDress92,BryantKlaere}. We show that there is a universal ultrahomogeneous $L_1$ diversity, which supports the naturalness of the concept of $L_1$ diversities. In contrast, we establish that there cannot be a universal ultrahomogeneous $L_1$ metric space, since the class of finite $L_1$ metric spaces does satisfy the amalgamation property. 
\revision{These} last results appears to be new, despite the fact that $L_1$ metrics have been studied for decades.
 

\section{Fra{\"\i}ss{\'e} limits for metric structures}\label{sec:benyaacov}




Ben Yaacov's theory \cite{yaacov2008model} concerns  \emph{metric structures}: these are metric spaces with collections of relations (which in continuous logic are real-valued functions of tuples of elements) and functions (which take tuples of elements to other elements). Here we will use Ben Yaacov's theory only for \emph{relational metric structures}, which are metric structures having no functions, and consequently no constants. This restriction has the advantage that finite structures are precisely the same as finitely generated structures.

The following corresponds to \cite[Def.\ 3.1]{yaacov2008model}.
\begin{defn}
Let $\mathcal{L}$ be a collection of symbols (all of which we think of as predicate symbols) each with an associated natural number which is its arity. An \revision{$\mathcal{L}$-\emph{structure}} $\mathfrak{A}$ consists of a complete metric space $(A,d)$ together with, for each $n$-ary predicate symbol $R \in \mathcal{L}$, a uniformly continuous interpretation $R^\mathfrak{A} \colon  A^n \rightarrow \RR$. The symbol $d$ is a distinguished binary predicate symbol. We will follow the convention that Latin letters correspond to the domain of a given structure, e.g. $\mathrm{dom}(\mathfrak{A})=A$. 
\end{defn}

An \emph{embedding} of $\mathcal{L}$-structures $\phi \colon \mathfrak{A} \rightarrow \mathfrak{B}$ is a map  $\phi \revision{\colon A \rightarrow B}$ such that for each $n$-ary predicate symbol $R \in \mathcal{L}$ and all $\bar{a} \in A^n$
\[
R^{\mathfrak{B}}(\phi(\bar{a})) = R^{\mathfrak{A}}(\bar{a}).
\] 
Note that since $d$ is one of the predicate symbols, embeddings are always isometric.
We define $\mathfrak{A}$ to be a \emph{substructure} of $\mathfrak{B}$ if there is an embedding $\phi \colon \mathfrak{A} \rightarrow \mathfrak{B}$. An \emph{isomorphism} is a surjective embedding. An \emph{automorphism} is an isomorphism from a structure to itself.

A \emph{partial isomorphism} $\phi \colon \mathfrak{A} \dashrightarrow \mathfrak{B}$ is an embedding $\phi \colon A_0 \rightarrow B$ where $A _0 \subseteq A$. We say that such a partial isomorphism is \emph{finite} if its domain $A_0$ is finite.

Corresponding to \cite[Def.\ 3.3]{yaacov2008model},
we say a separable structure $\mathfrak{M}$ is \emph{approximately ultrahomogeneous} if every finite partial isomorphism $\phi \colon \mathfrak{M} \dashrightarrow \mathfrak{M}$ is arbitrarily close to the restriction of an automorphism of $\mathfrak{M}$:
for every $\epsilon >0$, there is an automorphism $f$ of $\mathfrak{M}$ such that $d(\phi a, f a) < \epsilon$ for all $a \in \mathrm{dom} \phi$.

We say a separable structure $\mathfrak{M}$ is \emph{ultrahomogeneous} if every   finite partial isomorphism $\phi \colon \mathfrak{M} \dashrightarrow \mathfrak{M}$ extends to an automorphism.


Following \cite[Def.\ 3.5]{yaacov2008model}, we have the following definitions.
\begin{defn}
Let $\mathcal{K}$ be a class of finite structures. 
\begin{enumerate}
\item By a \revision{\emph{$\mathcal{K}$-structure}} we mean an $\mathcal{L}$-structure $\mathfrak{A}$ such that 
all finite substructures of $\mathfrak{A}$ are in $\mathcal{K}$.
\item We say $\mathcal{K}$ has \revision{\emph{HP (the Hereditary Property)}} if all  members of $\mathcal{K}$ are $\mathcal{K}$-structures.
\item Suppose $\mathcal{K}$ has HP.
We say $\mathcal{K}$ has \revision{\emph{AP (the Amalgamation Property)}} if for every $\mathfrak{B}, \mathfrak{C} \in \mathcal{K}$ and embeddings $f_B \colon \mathfrak{A} \rightarrow \mathfrak{B}$ and $f_C \colon \mathfrak{A} \rightarrow \mathfrak{C}$ from a third finite structure $\mathfrak{A} \in \mathcal{K}$, there is a finite structure $\mathfrak{D} \in \mathcal{K}$ and embeddings $g_B \colon \mathfrak{B}  \rightarrow \mathfrak{D}$, $g_C \colon \mathfrak{C} \rightarrow \mathfrak{D}$ with $g_B \circ f_B = g_C \circ f_C$ on $A$.
\item Suppose $\mathcal{K}$ has HP. The conditions on $\mathcal{K}$ for when \revision{\emph{NAP (the Near Amalgamation Property)}} holds are the same as  for when  AP holds, except we only require that for all $\epsilon>0$ the existence of $g_B, g_C, \mathfrak{D}$ such that 
\[
d(g_B(f_B(a)), g_C(f_C(a))) < \epsilon
\]
for all $a \in A$.
\item We say that $\mathcal{K}$ has \revision{\emph{JEP (the Joint Embedding Property)}} if every two structures in $\mathcal{K}$ embed into a third one.
\end{enumerate}
\end{defn}

Note that for the relational structures we consider here, AP  implies  NAP.

A key part of Ben Yaacov's framework is studying the space of tuples of elements from metric structures as a metric space itself; these are called the enumerated structures.  The metric he defines on the enumerated structures is analogous to the Gromov-Hausdorff distance between compact metric spaces.

\begin{defn} \label{def:4}
Let $\mathcal{K}$ be a class of finite structures with NAP. Let $\mathcal{K}_n$ be the \revision{\emph{enumerated structures of length $n$}},  the set of all $n$-element tuples of members of some structures from $\mathcal{K}$.  
\end{defn}

So, $\bar{a} \in \mathcal{K}_n$ means that there is some structure $\mathfrak{A}$ with domain $A$ such that $\bar{a} \in A^n$. The tuple $\bar{a}$ is not a structure in $\mathcal{K}$ itself, because there may be repeated entries, and order is relevant. However,  in a natural way, for each $k$-ary predicate symbol $R$ and each $k$-length sub-tuple of $\bar{a}$, we can apply $R$ to that sub-tuple of $\bar{a}$ using  the interpretation of $R$ in $\mathcal{K}$. Similarly, we can talk of embedding the tuples in $\mathcal{K}_n$ in structures in $\mathcal{K}$. This is any map that takes the entries of $\bar{a} \in \mathcal{K}_n$ and maps them into  a structure
$\mathcal{K}$ while preserving the values of all the relations.

\begin{defn}
 Let $\bar a, \bar b \in \mathcal{K}_n$.  We define
 $d_{\mathcal{K}}(\bar a, \bar b)= \inf_{\bar a', \bar b'} \max_i d(\bar a_i', \bar b_i')$ where $\bar a', \bar b'$ are images of  $\bar a ,\bar b$ under embeddings of structures   into a third structure in $\mathcal K$. 
\end{defn}

Informally, for given enumerated structures $\bar a$ and $\bar b$ in $\mathcal{K}_n$ we consider embedding them simultaneously in a common structure in $\mathcal{K}$.  We take the maximum distance in between corresponding $a_i$ and $b_i$ in the embedding for all $i$. Then we take the infimum of this quantity over all such embeddings to get 
$d_{\mathcal{K}}(\bar a, \bar b)$.

Ben Yaacov makes the following observation as a comment, citing his Lemma 3.8.
\begin{lem}
If $\mathcal{K}$ is a class of finite structures satisfying NAP and  JEP 
then
$d_{\mathcal{K}}$ is a pseudometric on $\mathcal{K}_n$ for each $n$.
\end{lem}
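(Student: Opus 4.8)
The plan is to verify the four defining properties of a pseudometric: non-negativity, vanishing on the diagonal ($d_{\mathcal K}(\bar a,\bar a)=0$), symmetry, and the triangle inequality. Before any of these, I would first check that the infimum in the definition is taken over a nonempty set, so that $d_{\mathcal K}(\bar a,\bar b)$ is a well-defined non-negative real number. This is precisely where JEP enters: if $\bar a$ and $\bar b$ arise from finite structures $\mathfrak A,\mathfrak B\in\mathcal K$ (the substructures generated by their distinct entries, which lie in $\mathcal K$ by HP), then JEP supplies a common structure $\mathfrak D\in\mathcal K$ into which both embed, yielding at least one admissible pair of images $\bar a',\bar b'$.

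The three properties other than the triangle inequality are immediate. Non-negativity holds because each $\max_i d(\bar a_i',\bar b_i')$ is non-negative. For $d_{\mathcal K}(\bar a,\bar a)=0$ I would embed the structure generated by $\bar a$ into itself by the identity, giving $\max_i d(\bar a_i,\bar a_i)=0$. Symmetry follows because $d$ is symmetric and the collection of admissible embedding configurations is unchanged under swapping the roles of $\bar a$ and $\bar b$.

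The heart of the argument is the triangle inequality $d_{\mathcal K}(\bar a,\bar c)\le d_{\mathcal K}(\bar a,\bar b)+d_{\mathcal K}(\bar b,\bar c)$. Fix $\epsilon>0$. Using the definition of the infimum, I would choose a structure $\mathfrak D_1\in\mathcal K$ with images $\bar a',\bar b'$ of $\bar a,\bar b$ satisfying $\max_i d(\bar a_i',\bar b_i')<d_{\mathcal K}(\bar a,\bar b)+\epsilon$, and a structure $\mathfrak D_2\in\mathcal K$ with images $\bar b'',\bar c''$ of $\bar b,\bar c$ satisfying $\max_i d(\bar b_i'',\bar c_i'')<d_{\mathcal K}(\bar b,\bar c)+\epsilon$. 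The tuple $\bar b$ now sits inside both $\mathfrak D_1$ and $\mathfrak D_2$, so the substructure $\mathfrak B$ generated by its entries embeds into each. Applying NAP to these two embeddings of $\mathfrak B$, for any $\delta>0$ I obtain a structure $\mathfrak E\in\mathcal K$ and embeddings $p\colon\mathfrak D_1\to\mathfrak E$, $q\colon\mathfrak D_2\to\mathfrak E$ with $d(p(\bar b_i'),q(\bar b_i''))<\delta$ for every $i$. Since embeddings are isometric, inserting the intermediate points and applying the triangle inequality in $\mathfrak E$ gives
\[
d(p(\bar a_i'),q(\bar c_i''))\le d(\bar a_i',\bar b_i')+d(p(\bar b_i'),q(\bar b_i''))+d(\bar b_i'',\bar c_i'')
\]
for each $i$. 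As $p(\bar a')$ and $q(\bar c'')$ are images of $\bar a$ and $\bar c$ in the common structure $\mathfrak E$, the left side bounds $d_{\mathcal K}(\bar a,\bar c)$ from above; taking the maximum over $i$ yields $d_{\mathcal K}(\bar a,\bar c)<d_{\mathcal K}(\bar a,\bar b)+d_{\mathcal K}(\bar b,\bar c)+2\epsilon+\delta$, and letting $\epsilon,\delta\to0$ completes the step.

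I expect the main obstacle to be organizing this triangle inequality: the two witnessing amalgamations $\mathfrak D_1$ and $\mathfrak D_2$ share only the tuple $\bar b$, and they must be fused into a single structure that simultaneously carries $\bar a$ and $\bar c$. NAP accomplishes exactly this gluing, but only up to the error $\delta$ in the images of $\bar b$; the care needed is to confirm that this residual error, together with the two $\epsilon$-slacks, can be absorbed in the limit. Because the whole quantity is defined by an infimum, the approximate agreement furnished by NAP is enough, and exact amalgamation (AP) is not required.
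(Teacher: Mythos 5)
Your proposal is correct and follows essentially the same route as the paper: well-definedness via JEP, the easy pseudometric axioms directly, and the triangle inequality by choosing near-optimal joint embeddings of $(\bar a,\bar b)$ and $(\bar b,\bar c)$ and fusing them with NAP over the copy of $\bar b$, then absorbing the $\epsilon$- and $\delta$-errors in the infimum. Your version is in fact slightly more careful than the paper's (explicitly naming the amalgamated substructure generated by $\bar b$ and tracking the NAP error $\delta$ separately from the two selection slacks), but the argument is the same.
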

\begin{proof}
Use JEP to show that $d_\mathcal{K}(\bar a, \bar b)$ is well-defined and non-negative for all size-$n$ structures $\bar a, \bar b$. Symmetry is straightforward, as is $d_{\mathcal{K}}(\bar a, \bar a)=0$. For the triangle inequality, consider enumerated structures $\bar a, \bar b, \bar c$. Let $\epsilon>0$. Let $(\bar a', \bar b')$ be a joint embedding of $\bar a$ and $\bar b$ so that $\max_i d(\bar a_i', \bar b_i') \leq d_\mathcal{K}(\bar a, \bar b) + \epsilon/3$, and likewise for $(\bar b'', \bar c'')$ so that  $\max_i d(\bar b_i'', \bar c_i'') \leq d_\mathcal{K}(\bar b, \bar c) + \epsilon/3$. Use NAP to get an amalgamation $(\bar a', \bar b', \bar b'', \bar c'')$ where $\max_i d(\bar b', \bar b'') \leq \epsilon/3$, but distances between $\bar a'$ and $\bar b'$, and between $\bar b''$ and $\bar c''$ are preserved. Then for all $i$
\[
d(a_i',c_i'') \leq d(a_i',b_i') + d(b_i',b_i'') + d(b_i'',c_i'') \leq d_\mathcal{K}(a_i,b_i) + d_\mathcal{K}(b_i,c_i) + \epsilon
\]
and so, taking the maximum over all $i$ and letting $\epsilon$ go to zero gives $d_\mathcal{K}(a_i,c_i) \leq d_\mathcal{K}(a_i, b_i) + 
d_\mathcal{K}(b_i,c_i)$.
\end{proof}

 Our next two definitions follow  \cite[Def.\ 3.12]{yaacov2008model}:
\begin{defn}
Let $\mathcal{K}$ be a class of finite structures. 
\begin{enumerate}
\item We say $\mathcal{K}$ satisfies \revision{\emph{PP (the Polish Property)}} if $\mathcal{K}_n$ is separable and complete under $d_{\mathcal{K}}$ for every $n$.
\item We say $\mathcal{K}$ satisfies \revision{\emph{CP (the Continuity Property)}} if for every $n$-ary predicate symbol $P$ the map from $\mathcal{K}_n  \rightarrow \RR$ given by $\bar a \rightarrow P^{\bar a}(\bar a)$ is continuous.
\end{enumerate}
\end{defn}
%
\begin{defn} \label{def:fraisseClass}
Let $\mathcal{K}$ be a class of finite $\mathcal{L}$-structures. We say that $\mathcal{K}$ is a \revision{\emph{Fra{\"\i}ss\'{e} class}} if $\mathcal{K}$ satisfies HP, JEP, AP, PP, CP.
\end{defn}

Our Def.\ \ref{def:fraisseClass} differs from Ben Yaacov's in that we use AP and not NAP, because we want to obtain ultrahomogeneous limits and not just approximately ultrahomogeneous limits (though we will require another, stronger condition).


Ben Yaacov\ does not define  the extension properties explicitly, but \revision{they are} implicit in his Corollary 3.20. We define them here and use them as an alternative definition of a limit of a \Fraisse class.

\begin{defn} \label{defn:extprop}
 Let $\mathcal{K}$ be a \Fraisse class and $\mathfrak{M}$ be a separable $\mathcal{K}$ structure.
 \begin{enumerate}
\item
$\mathfrak{M}$ has the \revision{\emph{approximate extension property}} if for all finite $\mathcal{K}$ structures $\mathfrak{B}$, finite enumerated structures $\bar a$ \revision{with elements in $B$}, embedding $h \colon \bar a \rightarrow \mathfrak{M}$ and $\epsilon>0$, there is an embedding $f \colon \mathfrak{B} \rightarrow \mathfrak{M}$  such that $d(f \bar a, h \bar a ) <\epsilon$.  Here $d$ denotes the maximum distance between corresponding elements of the two enumerated structures.
\item
$\mathfrak{M}$ has the \revision{\emph{exact extension property}} if for all finite $\mathcal{K}$ structures $\mathfrak{B}$, finite enumerated structures $\bar a$ \revision{with elements in $B$}, and embedding $h \colon \bar a \rightarrow \mathfrak{M}$, there is an embedding $f \colon \mathfrak{B} \rightarrow \mathfrak{M}$  that extends $h$.
\end{enumerate}
\end{defn}

In the following our approximate limits correspond to Ben Yaacov's limits (his Def.\ 3.15).

\begin{defn}
 Let $\mathcal{K}$ be a \Fraisse class and $\mathfrak{M}$ be a separable $\mathcal{K}$ structure.
 \begin{enumerate}
\item
$\mathfrak{M}$ is an \revision{\emph{approximate limit}} of $\mathcal{K}$ if $\mathfrak{M}$ has the approximate extension property.
\item
$\mathfrak{M}$ is an \revision{\emph{exact limit}} of $\mathcal{K}$ if $\mathfrak{M}$ has the exact extension property.
\end{enumerate}
\end{defn}

Corresponding to Ben Yaacov's Lemma 3.17, Theorem 3.19, and Theorem 3.21 we have the following results.
\begin{lem}
Every \Fraisse class has an approximate limit.
\end{lem}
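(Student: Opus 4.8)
The plan is to build $\mathfrak{M}$ as the metric completion of the union of an increasing chain of finite structures $\mathfrak{A}_0 \subseteq \mathfrak{A}_1 \subseteq \cdots$ drawn from $\mathcal{K}$, exactly as in the classical \Fraisse construction but with amalgamation replaced by near amalgamation to absorb the metric errors. First I would set up the bookkeeping. A finite structure is captured by an enumeration of all its points, i.e.\ by a member of some $\mathcal{K}_m$; since each $\mathcal{K}_n$ is separable under $d_{\mathcal{K}}$ (PP), I may fix for every $n$ a countable $d_{\mathcal{K}}$-dense family of enumerated structures, and this reduces the a priori proper-class-many extension tasks to a countable list. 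A single task is a finite structure $\mathfrak{B}$ from this countable family together with a distinguished subtuple $\bar a$ of its points, a tuple $\bar c$ of points of the current skeleton, and a rational tolerance $\epsilon>0$; I arrange the list so that every task recurs infinitely often.

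Next I would carry out the construction stage by stage, starting with any $\mathfrak{A}_0 \in \mathcal{K}$. At stage $s$, given $\mathfrak{A}_s$ and the $s$-th task $(\mathfrak{B}, \bar a, \bar c, \epsilon)$: if $\bar c$ is an approximate copy of $\bar a$ lying in $\mathfrak{A}_s$ (within $\epsilon$ in $d_{\mathcal{K}}$), I use NAP — which AP provides, as the excerpt notes, and with JEP covering the case of empty overlap — to amalgamate $\mathfrak{B}$ and $\mathfrak{A}_s$ into some $\mathfrak{A}_{s+1} \in \mathcal{K}$ so that the image of $\bar a$ under $g_B$ is within $\epsilon$ of the image of $\bar c$; otherwise I set $\mathfrak{A}_{s+1} = \mathfrak{A}_s$. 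Identifying $\mathfrak{A}_s$ with its image yields the chain. Let $\mathfrak{A}_\infty = \bigcup_s \mathfrak{A}_s$, a countable $\mathcal{K}$-structure, and let $\mathfrak{M}$ be its completion; the predicates extend uniquely by uniform continuity, and $\mathfrak{M}$ is separable since $\mathfrak{A}_\infty$ is a countable dense subset.

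With $\mathfrak{M}$ in hand I would verify the approximate extension property. Given a finite $\mathfrak{B}$, a tuple $\bar a$ in $B$, an embedding $h \colon \bar a \to \mathfrak{M}$, and $\epsilon>0$, I first replace $\mathfrak{B}$ by a $d_{\mathcal{K}}$-close member of the countable family and approximate the target $h(\bar a)$ by a tuple $\bar c$ of skeleton points lying in some $\mathfrak{A}_s$; continuity (CP) and uniform continuity of the predicates guarantee that $\bar c$ is then an approximate copy of $\bar a$. Since the corresponding task recurs after stage $s$, the construction embeds a copy of $\mathfrak{B}$ into some $\mathfrak{A}_t \subseteq \mathfrak{M}$ with the image of $\bar a$ within the prescribed tolerance of $\bar c$, hence within $\epsilon$ of $h(\bar a)$ once the three approximations are summed. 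Taking the tolerances small enough delivers the required $f$.

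The main obstacle is twofold and lies in the passage to the completion. First, I must keep the accumulated metric error under control: each near amalgamation introduces a genuine discrepancy, so I would take the task tolerances to be summable (along a fixed convergent series) so that distances in the skeleton stabilize and the completion is well behaved. Second, and more seriously, I must check that $\mathfrak{M}$ is genuinely a $\mathcal{K}$-structure, i.e.\ that \emph{every} finite substructure of the completion lies in $\mathcal{K}$ and not merely arbitrarily close to one. This is exactly where the completeness half of PP, together with CP, is needed: I would identify finite substructures of $\mathfrak{M}$ with $d_{\mathcal{K}}$-limits of finite substructures of $\mathfrak{A}_\infty$ and use that $\mathcal{K}_n$ is complete to realize those limits inside $\mathcal{K}$.
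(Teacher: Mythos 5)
The paper itself supplies no proof of this lemma: it is stated as the counterpart of Ben~Yaacov's Lemma~3.17 and imported by citation, so the comparison is really with the standard argument behind that citation. Your sketch is the classical chain-of-finite-structures construction, and it is essentially sound, but it differs in flavour from Ben~Yaacov's route: he works with (completions of) spaces of infinite enumerated structures and extracts the limit from a Cauchy sequence of approximate embeddings, precisely because his setting has only NAP; your increasing-union construction is legitimate here because the paper's Definition of a \Fraisse class demands exact AP, so each stage is a genuine embedding and---contrary to your stated worry---no metric error accumulates inside the skeleton (summable tolerances are needed only in the pure-NAP setting, where one does not even get a chain of honest embeddings). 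One step you should make explicit: AP and NAP as defined in the paper require an \emph{exact} common substructure with embeddings into both sides, whereas at your stage $s$ the tuples $\bar a$ and $\bar c$ satisfy only $d_{\mathcal{K}}(\bar a,\bar c)<\epsilon$; the repair is routine but not literally ``use NAP''---unwind the definition of $d_{\mathcal{K}}$ to obtain a witness $\mathfrak{D}\in\mathcal{K}$ containing copies of $\bar a$ and $\bar c$ within $\epsilon$ of each other, then apply exact AP twice, amalgamating $\mathfrak{B}$ with $\mathfrak{D}$ over $\bar a$ and the result with $\mathfrak{A}_s$ over $\bar c$, which yields $\mathfrak{A}_{s+1}$ with the required error bound. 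Your final paragraph is exactly the intended use of the hypotheses: the completeness half of PP realizes $d_{\mathcal{K}}$-limits of finite substructures of $\mathfrak{A}_\infty$ inside $\mathcal{K}_n$, and CP transfers the predicate values to the limit, which in the relational setting (where equal predicate values on corresponding tuples, including $d$, force an isomorphism) shows every finite substructure of the completion lies in $\mathcal{K}$.
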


\begin{thm}
The approximate limit of a \Fraisse class is unique up to isomorphism.
\end{thm}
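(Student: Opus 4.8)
The plan is to run an approximate back-and-forth between two approximate limits and to take a limit of the resulting finite partial isomorphisms. Let $\mathfrak{M}$ and $\mathfrak{N}$ be approximate limits of the \Fraisse class $\mathcal{K}$; I want to produce an isomorphism $F\colon \mathfrak{M}\to\mathfrak{N}$. Using PP, fix countable dense sequences $(m_i)_{i\in\NN}$ in $\mathrm{dom}(\mathfrak{M})$ and $(n_j)_{j\in\NN}$ in $\mathrm{dom}(\mathfrak{N})$, arranged so that every $m_i$ and every $n_j$ is scheduled cofinally often, and fix a summable tolerance sequence, say $\epsilon_k=2^{-k}$. The construction will produce, at each stage $k$, finite tuples $\bar a^{\,k}$ in $\mathfrak{M}$ and $\bar b^{\,k}$ in $\mathfrak{N}$ that form an \emph{exact} finite partial isomorphism, i.e. the correspondence $\bar a^{\,k}\mapsto \bar b^{\,k}$ preserves every predicate (including $d$), equivalently $d_{\mathcal{K}}(\bar a^{\,k},\bar b^{\,k})=0$. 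The essential difference from the classical argument is that the approximate extension property only lets us place images to within a prescribed error, so we cannot extend these maps by taking unions.

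At an odd (forth) stage I take the current pair $(\bar a,\bar b)$, append the next scheduled dense point $m_i$ to form a finite $\mathcal{K}$-structure $\mathfrak{B}$ with enumerated substructure $\bar a$, and apply the approximate extension property of $\mathfrak{N}$ to the embedding $\bar a\mapsto\bar b$ with tolerance $\epsilon_k$. This yields an embedding $f\colon\mathfrak{B}\to\mathfrak{N}$ whose restriction to $\bar a$ lies within $\epsilon_k$ of $\bar b$; I set the new pair to be $((\bar a,m_i),\,f(\bar a,m_i))$. Even (back) stages are symmetric, applying the approximate extension property of $\mathfrak{M}$ to the inverse correspondence so as to bring the next scheduled $n_j$ into the range. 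In this way each new pair remains an exact partial isomorphism, while the position of any already-placed point (on whichever side is being re-embedded) moves by at most $\epsilon_k$ at stage $k$.

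Because the tolerances are summable, for each slot the successive positions of the corresponding point form a Cauchy sequence on both sides, which converges by the completeness in PP; this defines limiting tuples $\bar a^{\,\infty}$ in $\mathfrak{M}$ and $\bar b^{\,\infty}$ in $\mathfrak{N}$, and I set $F(\bar a^{\,\infty}_i)=\bar b^{\,\infty}_i$. Distances are preserved exactly at every finite stage and the residual drift vanishes, so $F$ is isometric on its domain and extends uniquely to an isometric embedding of $\mathfrak{M}$; moreover, since each $m_i$ and $n_j$ is scheduled cofinally, its residual drift after late stages tends to $0$, so the limiting domain and range are dense and $F$ (and its symmetric inverse $G$) extends to all of $\mathfrak{M}$ (resp. $\mathfrak{N}$) by uniform continuity. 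Each predicate is preserved exactly at every stage, so by CP and the uniform continuity of the interpretations, $R^{\mathfrak{N}}(F\bar a)=\lim_k R^{\mathfrak{N}}(\bar b^{\,k})=R^{\mathfrak{M}}(\bar a)$ in the limit; thus $F$ is an embedding, the back stages make it onto, and the relations $G\circ F=\mathrm{id}$, $F\circ G=\mathrm{id}$ on the dense sets upgrade it to an isomorphism.

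The main obstacle is precisely this error bookkeeping. In the exact (classical) case one simply unions a chain of finite partial isomorphisms, but here the maps drift, so the crux is to choose the tolerances $\epsilon_k$ and to schedule the forth and back steps so that the accumulated drift is summable and so that every dense point is revisited cofinally. One must then verify that the single limiting correspondence is simultaneously total, surjective, isometric, and predicate-preserving — the first two relying on the cofinal scheduling and completeness, and the last two relying on the exact preservation at each finite stage together with the continuity supplied by CP.
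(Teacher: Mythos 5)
Your proof is correct and is essentially the argument the paper relies on: the paper states this theorem without proof, deferring to Ben Yaacov's Theorem 3.19, whose proof is exactly this approximate back-and-forth with summable tolerances, exact finite partial isomorphisms that drift by at most $\epsilon_k$ per stage, and a passage to the limit using completeness and density. One cosmetic correction: the completeness and separability you use come from the definition of an $\mathcal{L}$-structure and of a separable $\mathcal{K}$-structure (and predicate preservation in the limit needs only the uniform continuity of the interpretations), not from PP and CP, which concern the spaces $(\mathcal{K}_n, d_{\mathcal{K}})$ --- but this misattribution does not affect the validity of the argument.
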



\begin{thm}
Let $\mathcal{K}$ be a class of finite relational structures. Then the following are equivalent: \\
(i) $\mathcal{K}$ is a \Fraisse class. \\
(ii) $\mathcal{K}$ is the class of all finite substructures of a separable approximately ultrahomogeneous structure $\mathfrak{M}$. \\
Furthermore, $\mathfrak{M}$ is the approximate limit of $\mathcal{K}$, and is hence unique up to isomorphism and universal for separable $\mathcal{K}$-structures.
\end{thm}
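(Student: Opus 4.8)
The plan is to prove the two implications separately and then extract the ``furthermore'' clause from the existence and uniqueness results for approximate limits already recorded above. Throughout, $\mathfrak{M}$ denotes a separable, complete $\mathcal{K}$-structure, and I will repeatedly use that the members of the \Fraisse class embed into its limit and that the limit carries the approximate extension property of Definition~\ref{defn:extprop}.

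\emph{(i)~$\Rightarrow$~(ii).} Starting from a \Fraisse class $\mathcal{K}$, take the approximate limit $\mathfrak{M}$ guaranteed by the existence lemma. That $\mathcal{K}$ is exactly the class of finite substructures of $\mathfrak{M}$ is almost bookkeeping: one inclusion holds because $\mathfrak{M}$ is a $\mathcal{K}$-structure, and the other because applying the approximate extension property to a member $\mathfrak{B}\in\mathcal{K}$ with the empty partial embedding produces an honest embedding $\mathfrak{B}\to\mathfrak{M}$. The real work is approximate ultrahomogeneity. Fixing a finite partial isomorphism $\phi\colon\mathfrak{M}\dashrightarrow\mathfrak{M}$, an $\epsilon>0$, and a dense sequence $(m_j)$ in $\mathfrak{M}$, I would run a metric back-and-forth, alternately drawing $m_j$ into the domain and into the range by invoking the approximate extension property on the finite substructure generated by the points chosen so far. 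Since each step perturbs the earlier choices, I would spend an error budget $\epsilon\,2^{-j}$ at stage $j$, so that the images of any fixed point form a Cauchy sequence; completeness of $\mathfrak{M}$ then gives a limit map $f$ on the dense set, extended by uniform continuity. The Continuity Property makes $f$ preserve every predicate, the back steps make its range dense and hence (by completeness) all of $\mathfrak{M}$, and so $f$ is an automorphism with $d(\phi a,fa)\le\epsilon$ on $\mathrm{dom}\,\phi$.

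\emph{(ii)~$\Rightarrow$~(i).} Now let $\mathcal{K}$ be the class of finite substructures of a separable, approximately ultrahomogeneous $\mathfrak{M}$. HP is transitivity of embeddings, and JEP is witnessed by the finite substructure of $\mathfrak{M}$ spanned by the union of two given images. For PP and CP I would observe that every tuple of $\mathcal{K}_n$ embeds into $\mathfrak{M}$, so the sup-metric induces a $1$-Lipschitz surjection $M^n\to(\mathcal{K}_n,d_{\mathcal{K}})$; separability descends immediately, completeness descends after using approximate ultrahomogeneity to realize a $d_{\mathcal{K}}$-Cauchy sequence as a genuine Cauchy sequence inside $\mathfrak{M}$, and CP descends from uniform continuity of the interpretations. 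The amalgamation condition is handled by embedding $\mathfrak{B}$ and $\mathfrak{C}$ into $\mathfrak{M}$ via $g_B,g_C$, viewing $g_C\circ f_C\circ(g_B\circ f_B)^{-1}$ as a finite partial isomorphism of $\mathfrak{M}$, and using approximate ultrahomogeneity to produce an automorphism that aligns the two images of $\mathfrak{A}$ to within $\epsilon$; the finite substructure spanned by the realigned copies is the desired (near-)amalgam.

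\emph{Furthermore, and the main obstacle.} To place $\mathfrak{M}$ as the approximate limit I would check that it has the approximate extension property --- given $\mathfrak{B}$, $\bar a$ and $h$, first embed $\mathfrak{B}\to\mathfrak{M}$, then compose with an automorphism aligning its image of $\bar a$ with $h\bar a$ --- after which the uniqueness theorem identifies $\mathfrak{M}$ up to isomorphism, and universality for separable $\mathcal{K}$-structures follows from a one-sided forth argument with a summable error budget, taking a completeness limit and extending by continuity. I expect two pressure points. The first, and most technical, is the back-and-forth of (i)~$\Rightarrow$~(ii): because the extension property is only approximate, the partial maps never literally extend one another, and one must manage the accumulating perturbations so that the limit is simultaneously a well-defined isometry and surjective --- this is exactly where completeness and the Continuity Property are indispensable. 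The second is the amalgamation step of (ii)~$\Rightarrow$~(i): approximate ultrahomogeneity yields only near-amalgams, so the passage to the exact agreement demanded by Definition~\ref{def:fraisseClass} must be driven through a completeness argument in $\mathcal{K}_n$, and it is precisely this gap between approximate and exact amalgamation that the paper's later bounded Amalgamation Property is designed to close.
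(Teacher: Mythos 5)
The first thing to note is that the paper contains no proof of this theorem: it is stated, together with the existence lemma and the uniqueness theorem, as ``corresponding to Ben Yaacov's Lemma 3.17, Theorem 3.19, and Theorem 3.21'', so the benchmark is the proof in \cite{yaacov2015fraisse}. Your sketch of (i)\,$\Rightarrow$\,(ii) --- obtain the approximate limit, extract an exact embedding of each $\mathfrak{B}\in\mathcal{K}$ by applying the approximate extension property over the empty base, then run a metric back-and-forth against a dense sequence with error budget $\epsilon\,2^{-j}$, closing off with completeness and uniform continuity --- is exactly that standard argument, as are your verifications of HP, JEP, PP, CP and the ``furthermore'' bookkeeping. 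The genuine gap is the amalgamation step of (ii)\,$\Rightarrow$\,(i). As you yourself concede, approximate ultrahomogeneity of $\mathfrak{M}$ delivers only near-amalgams, i.e.\ NAP, whereas Definition~\ref{def:fraisseClass} of this paper demands exact AP. Your proposed repair --- ``drive the passage through a completeness argument in $\mathcal{K}_n$'' --- does not go through as stated: if $(\bar b^k,\bar c^k)\in\mathcal{K}_{m+\ell}$ enumerates a near-amalgam of $\mathfrak{B}$ and $\mathfrak{C}$ at tolerance $2^{-k}$, there is no reason for this sequence to be $d_{\mathcal{K}}$-Cauchy. Aligning consecutive near-amalgams by amalgamating over the copy of $\mathfrak{B}$ controls the $A$-parts of the two copies of $\mathfrak{C}$ but leaves the points of $C\setminus A$ entirely uncontrolled (and symmetrically with $\mathfrak{B}$ and $\mathfrak{C}$ exchanged), so PP hands you no limit tuple, and the argument stalls precisely at the NAP-versus-AP divide.

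Relatedly, your closing claim that bAP ``is designed to close'' this gap misreads the paper's architecture: bAP is a strengthening of exact AP, introduced to upgrade the \emph{approximate} extension property of the limit to the \emph{exact} one (hence approximate to exact ultrahomogeneity); nowhere does the paper claim or use an implication from NAP to AP --- for finite $L_1$ metrics both fail together, and for diversities and $L_1$ diversities bAP is verified directly rather than deduced. You should also be aware that, since Ben Yaacov's Theorem 3.21 is stated with NAP, the theorem as reproduced here with AP in clause (i) is not literally his statement in the direction (ii)\,$\Rightarrow$\,(i); the paper passes over this discrepancy in silence, and your attempt is stuck exactly where the citation is silent. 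To complete (ii)\,$\Rightarrow$\,(i) as written you would need either to weaken (i) to NAP, as in the source, or to strengthen the hypothesis on $\mathfrak{M}$ to exact ultrahomogeneity, under which your alignment argument does produce an honest amalgam: an automorphism carrying one copy of $\mathfrak{A}$ exactly onto the other, with the finite substructure spanned by the realigned images serving as $\mathfrak{D}$.
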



\section{Approximate versus exact \Fraisse limits} \label{sec:exact}

Here we show that if the $\mathcal{K}$-structures satisfy a property we call  bAP (for  ``bounded Amalgamation Property")  then approximate \Fraisse limits are in fact exact \Fraisse limits. 
Recall  from Def.\ \ref{def:4} that for a class $\mathcal{K}$   of finite $\mathcal{L}$-structures  $\mathcal{K}_n$ is the class of enumerated $\mathcal{K}$-structures of length~$n$. 

There is another way to define the distance between two enumerated relational structures that does not appear to have an analogue in Ben Yaacov's paper. We define
\begin{equation} \label{eqn:dinf} 
d_{\infty} (\bar a, \bar b) = \max_{m \leq n} \max_{X \subseteq \{1,\ldots, n\}, |X|=m, R \footnotesize{\mbox{ arity }} m} | R( \bar{a}_X) - R( \bar{b}_X) |,
\end{equation}
where we include the metric $d$ as a binary predicate.
For each $m$, $1 \leq m \leq n$, we look at all predicates of arity $m$. We then look at all subsets of $\bar a$ of size $m$ and the corresponding subset of $\bar b$ and look at the difference between the predicates on those subsets. We take the $\max$  of the difference over all such $m$, predicates $R$, and index sets $X$.  Unlike with $d_\mathcal{K}$, $d_\infty$ does make use of any common embeddings of $\bar{a}$ and $\bar{b}$ into another metric structure.
$d_\infty$  can be thought of as a distance obtained by using the predicates to map enumerated structures into   $\ell^k_\infty$ for some $k$. We will establish the Lipschitz equivalence of $d_{\mathcal{K}}$ and $d_\infty$ for given structures and then use this to prove statements about the space  $(\mathcal{K}_n,d_{\mathcal{K}})$. 

\begin{defn}   We say that $\mathcal{K}$ satisfies \revision{\emph{bAP(the bounded Amalgamation Property)}} if there is a constant $c$ depending only on $n$ such that if $(\bar a, w)$ and $(\bar a,z)$ are two enumerated structures in $\mathcal{K}_n$ with common substructure $\bar a$,  then there is an amalgamation $B=(\bar a, w, z)$ in $\mathcal{K}_{n+1}$ such that 
\[
d_B(w,z) \leq c \, d_\infty((\bar a, w), (\bar a, z)).
\]
\end{defn}

The idea behind bAP is to strengthen AP so that we have some control on how far apart the non-common points are in the amalgamated space. So if   every predicate $R$ takes almost the same value on corresponding subsets of  $(\bar a ,w)$ and $(\bar a,z)$, then $w$ and $z$ are very close to each other in the amalgamation. 

Note that bAP implies AP.  To see this, observe that (forgetting the metric bound) this is a one-point amalgamation result for enumerated structures of the same length. By taking repeated elements if necessary, this is a one-point amalgamation result for finite structures. 
An induction argument gives the   general amalgamation result.


The next result follows a result for metric spaces originally due to Urysohn; see \cite[Thm.\ 3.4]{Melleray2008a} or    \cite[Thm.\ 1.2.7]{Gao2009}. (We proved a similar result, but in   less generality, in \cite[Lemma 17]{bryant2018}.) Recall Definition~\ref{defn:extprop} where definitions of approximate and exact extension properties are given.
\begin{thm}
Let $\mathcal{K}$ be a \Fraisse class and $\mathfrak M$ be a $\mathcal{K}$-structure. If $\mathcal{K}$ satisfies bAP and $\mathfrak M$ satisfies the approximate extension property, then $ \mathfrak M$ satisfies the exact extension property.
\end{thm}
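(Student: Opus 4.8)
The plan is to combine the classical Urysohn-style completion argument with the metric bound that bAP provides: the approximate extension property lets us realize a new point to within arbitrarily small error, while bAP forces successive approximate realizations to be close together, so that they converge in the complete space $\mathfrak M$ to an \emph{exact} realization.

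First I would reduce to one-point extensions. Given a finite structure $\mathfrak B$, a finite enumerated substructure $\bar a$ with entries in $B$, and an exact embedding $h\colon\bar a\to\mathfrak M$, enumerate the finitely many points of $B$ not already named by $\bar a$ and adjoin them one at a time; at each stage we extend an exact embedding of an enumerated structure by a single new point. Hence it suffices to treat $\mathfrak B=(\bar a,w)$ with $w$ one new point, and, writing $\bar p:=h\bar a$ for its exact image in $\mathfrak M$, to produce $q^\ast\in\mathrm{dom}(\mathfrak M)$ such that $(\bar p,q^\ast)$ carries exactly the predicate values of $(\bar a,w)$.

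Next I would construct $q_0,q_1,\dots\in\mathrm{dom}(\mathfrak M)$ together with errors $\epsilon_n\downarrow 0$ so that $d_\infty\big((\bar p,q_n),(\bar a,w)\big)\le\epsilon_n$ while the increments $d(q_n,q_{n+1})$ are summable. The point $q_0$ comes directly from the approximate extension property. For the inductive step, consider the abstract one-point extension $(\bar p,z)$ of $\bar p$ realizing the exact target predicate values of $w$; since $h$ is exact this is isomorphic to $(\bar a,w)$ and so lies in $\mathcal K_{|\bar p|+1}$. As $(\bar p,z)$ and the genuine $(\bar p,q_n)$ share the common substructure $\bar p$, bAP supplies an amalgamation $(\bar p,q_n,z)$ with
\[
d(q_n,z)\le c\,d_\infty\big((\bar p,q_n),(\bar p,z)\big)\le c\,\epsilon_n,
\]
the last step because the two enumerated structures agree on all subtuples of $\bar p$ and differ only on subtuples containing the final coordinate, where $(\bar p,z)$ carries the exact target values and $(\bar p,q_n)$ is within $\epsilon_n$ of them. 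Applying the approximate extension property to this amalgam, with $(\bar p,q_n)$ as the named part and the inclusion into $\mathfrak M$ as $h$, yields an embedding moving $\bar p,q_n$ by at most $\delta_n$; I set $q_{n+1}$ to be the image of $z$. Then $d(q_n,q_{n+1})\le\delta_n+c\,\epsilon_n$, and by uniform continuity of the interpretations $R^{\mathfrak M}$ the values at $q_{n+1}$ relative to the true $\bar p$ lie within some $\epsilon_{n+1}$ of target, where $\epsilon_{n+1}\to0$ as $\delta_n\to0$.

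Finally, choosing each $\delta_n$ (and hence $\epsilon_n$) small enough that $\sum_n(\delta_n+c\,\epsilon_n)<\infty$ makes $(q_n)$ Cauchy; by completeness of $\mathfrak M$ it converges to some $q^\ast$, and since $d_\infty\big((\bar p,q_n),(\bar a,w)\big)\le\epsilon_n\to0$, continuity of each $R^{\mathfrak M}$ gives that $(\bar p,q^\ast)$ realizes the target values exactly, so $f:=h\cup\{w\mapsto q^\ast\}$ is the desired exact embedding. I expect the main obstacle to be the bookkeeping coupling the two error budgets: the drift $\delta_n$ permitted by the approximate extension property must be chosen after $\epsilon_n$ yet small enough both to keep $d(q_n,q_{n+1})$ summable and to drive $\epsilon_{n+1}$ down, which forces one to invoke the modulus of uniform continuity of the predicates. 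This is the one place where the argument is genuinely more delicate than in the pure metric (Urysohn) case, where only distances — and hence a single Lipschitz constant — are at play.
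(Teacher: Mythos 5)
Your proof is correct and follows the same high-level scheme as the paper's --- alternate the approximate extension property with bAP to produce successive realizations $q_n$ of the one new point, make the increments summable, and pass to the limit in the complete structure $\mathfrak{M}$ --- but the induction is organized genuinely differently. The paper never forgets the history: it builds growing amalgams $M_p=(\bar a, z, w_0,\ldots,w_p)$ containing \emph{all} previously constructed points together with the abstract target $z$, maintains the invariant $d(w_p,z)\le 2^{-p}$ inside $M_p$, and then reads off the Cauchy increment $d(w_p,w_{p+1})\le 2^{-p}+2^{-(p+1)}$ purely from the triangle inequality through $z$ inside the next amalgam; since that amalgam extends the genuine tuple $(\bar a, w_0,\ldots,w_{p+1})$, this abstract distance is the true distance in $\mathfrak{M}$. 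Your version is memoryless --- you amalgamate only $(\bar p, q_n)$ with the abstract $(\bar p, z)$ --- and instead bounds $d(q_n,q_{n+1})\le \delta_n + c\,\epsilon_n$ externally, via the displacement $\delta_n$ of the approximating embedding plus the bAP bound. The price, which you correctly identify as the delicate point, is that because the embedding moves $\bar p$ by up to $\delta_n$, you must invoke the modulus of uniform continuity of the predicates to restore the invariant $d_\infty((\bar p,q_{n+1}),(\bar p,z))\le\epsilon_{n+1}$, choosing $\delta_n$ after $\epsilon_n$. It is worth noting that the paper's proof tacitly relies on the very same continuity step: its approximate extension property only bounds the metric displacement of the named tuple, yet the proof concludes a $d_\infty$ bound for the tuple with $w_{p+1}$ adjoined, which again requires (uniform) continuity of the predicates at the displaced points --- harmless for the $1$-Lipschitz predicates in all of the paper's examples, and in general both arguments need a common modulus for the relevant predicates of arity at most $n+1$. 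So your route is no less rigorous, and is arguably more explicit about the one subtle step, while the paper's history-carrying device buys clean fixed constants ($3\cdot 2^{-(p+1)}$) with no explicit mention of moduli; your initial reduction to one-point extensions matches the reduction the paper leaves implicit in its opening ``it suffices to show'' sentence.
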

\begin{proof}
 Let  $\bar{a}$ be an enumerated structure of length $n$ with elements taken from $\mathcal{M}$. Let $z$ be a point such that $(\bar a,z) \in \mathcal{K}$ with $\bar a$  embedded in $(\bar a, z)$. It suffices to show that there is a sequence $w_0, w_1, \ldots$ in $\mathfrak M$ such that for all $p$,
\begin{equation} \label{structapproxextenseq}
d(w_p,w_{p+1}) \leq 3 \cdot 2^{-(p+1)} \text{ and }  d_{\infty}((\bar a, z),(\bar a, w_p)) \leq 2^{-p}.
\end{equation}
Because $\mathfrak M$ is complete, the first part of \eqref{structapproxextenseq} shows that $\{w_p\}$ has a limit $w$. The second part of \eqref{structapproxextenseq} shows that $(\bar a, w)$ is isomorphic to $(\bar a,z)$, as required.

Using induction, we will construct the sequence $\{w_p\}$ satisfying conditions \eqref{structapproxextenseq}, along with
structures $M_p=(\bar a, z, w_0, \ldots, w_p)$ which are extensions of both $(\bar a,z)$ and $(\bar a, w_0, \ldots, w_p )$\revision{.}
In particular, the points $\{ w_p \}$ and the structures $M_p$ will satisfy for all $p\geq 0$:
\begin{enumerate}
\item[I.]  $d_{\infty}((\bar a, z),(\bar a, w_p)) \leq 2^{-p}$,
\item[II.] 
\begin{enumerate} 
\item $M_p$ is an extension of  $(\bar a,  w_0, \ldots, w_p )$,
\item $M_p$ is an extension of $(\bar a, z)$,
\item  $d(w_p,z) \leq 2^{-p}$ in $M_p$.
\end{enumerate}
\end{enumerate}

First, we use the approximate extension property to get a $w_0 \in M$ such that 
$$d_\infty( (\bar a, w_0),(\bar a, z)) \leq \min( c^{-1}, 1),$$where $c$ is the constant (depending on $n$) in bAP for $\mathcal{K}$. So $w_0$ satisfies condition (I), for $p=0$.
Then using bAP,  there is an amalgamation $M_0=(\bar a, z, w_0)$ in which $d(z, w_0) <1$, thereby satisfying condition (II) for $p=0$.

For the inductive step, suppose, for $p\geq 0$, we have  $w_0,\ldots,w_{p}$ and a  structure $M_p$ satisfying the conditions (I) and (II) above.
 We show  there exist $w_{p+1}$ and $M_p$, satisfying the corresponding conditions for $p+1$.

Condition (II.a) allows us to use  the approximate extension property to get $w_{p+1} \in M$ so that
$$d_\infty( (\bar a, w_0, \ldots w_{p+1}), (\bar a, w_0, \ldots, w_p, z)) \leq \min( c^{-1}, 1) 2^{-(p+1)}.$$ This immediately gives us $d_\infty( (\bar a, w_{p+1}), (\bar a,  z)) \leq  2^{-(p+1)}$ which is condition (I) for $p+1$. bAP allows us to amalgamate $(\bar a, w_0, \ldots w_{p+1})$ and $(\bar a, w_0, \ldots w_{p}, z)$ to get $(\bar a, w_0, \ldots w_{p+1}, z)$. bAP also gives us $d(w_{p+1},z)\leq 2^{-(p+1)}$ in the amalgamation, and so we have all of condition (II) for $p+1$. This concludes the inductive argument.

Now it remains to show that $d(\{w_p, w_{p+1}\}) \leq  3 \cdot 2^{-(p+1)}$ for each $p \geq 1$. In the structure $(\bar a, w_0, \ldots, w_p, w_{p+1},z)$ we have both $d(w_p,z) \leq 2^{-p}$ and $d(w_{p+1},z) \leq 2^{-(p+1)}$, so the triangle inequality gives the result. 
\end{proof}

\begin{thm} \label{thm:ultrahomoglimit}
Let $\mathcal{K}$ be a class of finite relational structures satisfying HP, JEP, bAP, PP, CP. Then $\mathcal{K}$ is a \Fraisse class, and its limit $\mathfrak{M}$ is ultrahomogeneous (therefore an exact limit in our terminology), in addition to being universal for separable $\mathcal{K}$-structures. It is the unique such structure.
\end{thm}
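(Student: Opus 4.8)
The plan is to assemble the conclusion from the machinery already in place, with the only genuinely new work being a back-and-forth argument for ultrahomogeneity. First I would observe that, as noted immediately after the definition of bAP, bAP implies AP. Hence $\mathcal{K}$ satisfies HP, JEP, AP, PP, and CP, so by Definition~\ref{def:fraisseClass} it is a \Fraisse class. The existence lemma then furnishes an approximate limit $\mathfrak{M}$, which by definition has the approximate extension property, and the preceding theorem (bAP together with the approximate extension property yields the exact extension property) upgrades this to the exact extension property. Thus $\mathfrak{M}$ is an exact limit, and universality together with uniqueness-up-to-isomorphism will come for free from the earlier theorem characterizing the approximate limit, once we know $\mathfrak{M}$ coincides with it.

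The heart of the argument is deriving ultrahomogeneity from the exact extension property by a back-and-forth construction. Given a finite partial isomorphism $\phi \colon \mathfrak{M} \dashrightarrow \mathfrak{M}$, fix a countable dense set $\{m_1, m_2, \ldots\}$ in $\mathfrak{M}$. I would build an increasing chain of finite partial isomorphisms $\phi = \phi_0 \subseteq \phi_1 \subseteq \cdots$, alternately (``forth'') forcing $m_k$ into the domain and (``back'') forcing $m_k$ into the range. To put a new point $z$ into the domain of $\phi_i \colon \bar a \mapsto \bar b$, I regard $(\bar a, z)$ as a finite $\mathcal{K}$-structure $\mathfrak{B}$ with $\bar a$ enumerated in it, and apply the exact extension property to the embedding $\phi_i \colon \bar a \to \mathfrak{M}$; this returns an embedding $f \colon \mathfrak{B} \to \mathfrak{M}$ extending $\phi_i$, and we set $\phi_{i+1} = \phi_i \cup \{ z \mapsto f(z)\}$. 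Because embeddings preserve all predicates and in particular $d$, the inverse $\phi_i^{-1}$ is again a finite partial isomorphism, so the symmetric ``back'' step is handled identically applied to $\phi_i^{-1}$. The union $\Phi = \bigcup_i \phi_i$ is then a partial isomorphism whose domain and range are both dense in $\mathfrak{M}$.

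The step I expect to be the main obstacle --- and the one genuinely specific to the metric setting --- is passing from $\Phi$, defined only on a dense set, to an honest automorphism of all of $\mathfrak{M}$. Since $\Phi$ preserves $d$ it is an isometry on its dense domain, hence uniformly continuous, so by completeness of $\mathfrak{M}$ it extends uniquely to an isometry $\bar\Phi \colon \mathfrak{M} \to \mathfrak{M}$; as the range of $\Phi$ is dense and $\bar\Phi$ is an isometry with complete domain, its image is closed and dense, whence $\bar\Phi$ is onto. It then remains to check that $\bar\Phi$ preserves every predicate $R$ and not merely $d$: here I would invoke that each $R^{\mathfrak{M}}$ is uniformly continuous, so that the identity $R^{\mathfrak{M}}(\bar\Phi \bar x) = R^{\mathfrak{M}}(\bar x)$, known on the dense domain of $\Phi$, propagates to all tuples by continuity. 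Thus $\bar\Phi$ is an automorphism extending $\phi$, giving ultrahomogeneity. Finally, ultrahomogeneity implies approximate ultrahomogeneity, so by the earlier equivalence theorem $\mathfrak{M}$ is the approximate limit, and is therefore universal for separable $\mathcal{K}$-structures and unique up to isomorphism; since any separable ultrahomogeneous structure whose finite substructures are exactly $\mathcal{K}$ is likewise the approximate limit by that same equivalence, $\mathfrak{M}$ is the unique such structure.
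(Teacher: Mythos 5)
Your proposal is correct and follows essentially the same route as the paper: bAP implies AP, so $\mathcal{K}$ is a \Fraisse class with a unique approximate limit, and the preceding theorem upgrades the approximate extension property to the exact extension property, with universality and uniqueness inherited from the earlier characterization of the approximate limit. The only difference is that you additionally write out the back-and-forth derivation of ultrahomogeneity from the exact extension property --- using that finite substructures of $\mathfrak{M}$ lie in $\mathcal{K}$, that inverses of finite partial isomorphisms are again partial isomorphisms, and that the resulting densely defined isometry extends to a surjective, predicate-preserving map by completeness and uniform continuity --- a step the paper leaves implicit, and your treatment of it is sound.
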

\begin{proof}
bAP implies AP, so $\mathcal{K}$ is a \Fraisse class. Hence $\mathcal{K}$ has a unique approximate limit. By the previous lemma, this approximate limit is in fact an exact limit.
\end{proof}

\section{A Universal Ultrahomogeneous Stochastic Process} \label{sec:stochproc}

In this section we apply our extension of Ben Yaacov's  metric \Fraisse theory to stochastic processes.
We show the existence of a unique universal ultrahomogeneous stochastic process
  with a separable index set taking values in a finite set.
A key step will be showing how to view stochastic processes as metric structures.

We recall the definition and basic theory of stochastic processes.
Let the  state space $S$ be a finite set and index set $T$ be arbitrary. Let $(\Omega,\mathcal{F},\mathbb{P})$ be a probability space. A \revision{\emph{stochastic process}} is an $S$-valued family of random variables $(X_t)_{t \in T}$ on $\Omega$. It is customary to denote the random variable $X_t$ by $X(t)$.

Every stochastic process has an associated family of finite-dimensional distributions. For every tuple $(t_1,\ldots,t_k)$ of distinct elements, $t_i \in T$ and tuple $(s_1,\ldots,s_k)$, $s_i \in S$ we define
\[
F_{t_1,\ldots,t_k}(s_1,\ldots,s_k) = \mathbb{P}(X(t_1)=s_1, \ldots, X(t_k)=s_k).
\]
In order to simplify notation, in what follows we will denote tuples of values by using an overbar, so that $\bar{t}=(t_1,\ldots,t_k)$ and $\bar{s}=(s_1,\ldots,s_k)$. For example, we would write the above definition as $F_{\bar{t}}(\bar{s}) = \mathbb{P}(X(\bar{t})=\bar{s})$.
For any stochastic process on $T$ taking values in $S$,
the finite-dimensional distributions satisfy, for any $k$-tuple of distinct values $\bar{t}$ and any $k$-tuple of states $\bar{s}$
\begin{enumerate}
\item
\( F_{\bar{t}} (\bar{s}) \in [0,1] \)
\item 
\(
  \sum_{\bar{s} \in S^k} F_{\bar{t}} (\bar{s})=1
  \)
 \item $\sum_{z \in S}F_{\bar{t},t_{k+1}}(\bar{s},z) = F_{\bar{t}}(\bar{s})$
 \item For any permutation $\sigma$ of $k$-tuples, $F_{\sigma(\bar{t})}(\sigma(\bar{s}))
=F_{\bar{t}}(\bar{s})$
\end{enumerate}
The Kolmogorov existence theorem guarantees that any such family of functions $F_{\bar{t}}$ satisfying properties (1), (2), (3), (4) are the finite-dimensional distributions of some stochastic process. Of course, there is not a one-to-one relationship between stochastic processes and families of finite-dimensional distributions: any set of finite-dimensional distributions identifies a whole class of stochastic processes.
In what follows, our theory will work at the level of families of finite-dimensional distributions, or in other words, stochastic processes in the weak sense. However, in order to make arguments more transparent we will often refer to a given stochastic process $(X_t)_{t \in T}$ whose distribution is $F_{\bar{t}}$.


We can  identify each such family of finite-dimensional distributions with a metric structure, once we add a single non-degeneracy condition: We assume that
\begin{enumerate}  \setcounter{enumi}{4}
\item 
\(\sum_{s \in S} F_{t_1,t_2}(s,s) < 1 \) for $t_1 \neq t_2$.
\end{enumerate}
This is equivalent to requiring $\mathbb{P}(X(t_1)= X(t_2))<1$ for $t_1 \neq t_2$. We can view this either as a restriction on the stochastic processes or as identifying the point $t_1$ and $t_2$ in $T$ when $X(t_1)=X(t_2)$ with probability 1. In either case, it ensures that  the index set $T$ is endowed with a metric, as the following lemma shows.
\begin{lem}
Let $T$ be a set and $F$ a family of finite-dimensional distributions on $T$  for a process taking values in a finite set $S$ that satisfies condition (5).
 For $t_1, t_2 \in T$, define the function 
\[
d(t_1,t_2) = 1- \sum_{s\in S} F_{t_1,t_2}(s,s) 
\]
Then $(T,d)$ is a metric space.
\end{lem}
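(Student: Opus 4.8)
The plan is to verify the three metric axioms for $d(t_1,t_2) = 1 - \sum_{s \in S} F_{t_1,t_2}(s,s)$ directly from properties (1)--(5) of the finite-dimensional distributions. Writing $d(t_1,t_2) = 1 - \mathbb{P}(X(t_1)=X(t_2))$ makes the probabilistic content transparent and will guide each step. First I would handle non-negativity and the identity of indiscernibles. Since $F_{t_1,t_2}(s,s) \in [0,1]$ by (1) and $\sum_{s \in S} F_{t_1,t_2}(s,s) \leq \sum_{\bar{s} \in S^2} F_{t_1,t_2}(\bar{s}) = 1$ by (2), we get $d(t_1,t_2) \geq 0$. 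For $t_1 = t_2$, the distribution is degenerate on the diagonal, so $\sum_s F_{t,t}(s,s) = 1$ and $d(t,t)=0$; conversely, condition (5) guarantees $d(t_1,t_2) > 0$ whenever $t_1 \neq t_2$, which is exactly why (5) was imposed.

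Symmetry follows immediately from the permutation invariance (4): swapping the two coordinates sends $F_{t_1,t_2}(s,s)$ to $F_{t_2,t_1}(s,s)$, so the two sums agree and $d(t_1,t_2)=d(t_2,t_1)$.

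The main obstacle, as always for this kind of metric, is the triangle inequality $d(t_1,t_3) \leq d(t_1,t_2) + d(t_2,t_3)$. Here the probabilistic reading is decisive: the inequality is equivalent to
\[
\mathbb{P}(X(t_1)=X(t_3)) \geq \mathbb{P}(X(t_1)=X(t_2)) + \mathbb{P}(X(t_2)=X(t_3)) - 1,
\]
which is just the elementary bound $\mathbb{P}(A \cap B) \geq \mathbb{P}(A) + \mathbb{P}(B) - 1$ applied to the events $A = \{X(t_1)=X(t_2)\}$ and $B=\{X(t_2)=X(t_3)\}$, since on $A \cap B$ we have $X(t_1)=X(t_3)$. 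To argue purely at the level of finite-dimensional distributions, I would invoke the three-dimensional marginals $F_{t_1,t_2,t_3}$, whose existence is guaranteed by the consistency condition (3). The key combinatorial inequality is that for each pair $(s,s')$,
\[
\mathbbm{1}[s=s'] \geq \mathbbm{1}[s=s''] + \mathbbm{1}[s''=s'] - 1
\]
for any intermediate value $s''$; summing the joint distribution $F_{t_1,t_2,t_3}(s,s'',s')$ against this pointwise inequality over $S^3$ and marginalizing via (3) back down to the relevant two-dimensional diagonals yields the desired inequality. The only real care needed is bookkeeping the marginalization so that each two-variable sum $\sum_s F_{t_i,t_j}(s,s)$ is correctly recovered from the three-variable distribution, which is routine given (3) and (4).
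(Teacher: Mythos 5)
Your proof is correct, and its skeleton matches the paper's: property (2) gives $d(t,t)=0$, property (5) gives positivity for distinct points, property (4) gives symmetry, and your triangle inequality rests on the same elementary fact as the paper's, since the bound $\mathbb{P}(A\cap B)\geq \mathbb{P}(A)+\mathbb{P}(B)-1$ with $A=\{X(t_1)=X(t_2)\}$ and $B=\{X(t_2)=X(t_3)\}$ is just the complement form of the union bound $\mathbb{P}(X(t_1)\neq X(t_3))\leq \mathbb{P}(X(t_1)\neq X(t_2))+\mathbb{P}(X(t_2)\neq X(t_3))$ that the paper uses. Where you genuinely diverge is in how that step is executed: the paper simply ``switches to the stochastic process viewpoint,'' tacitly relying on the Kolmogorov existence theorem (cited earlier in the section) to realize $F$ as the law of an actual process, whereas you stay entirely at the level of the finite-dimensional distributions, summing the pointwise inequality $\mathbf{1}[s=s']\geq \mathbf{1}[s=s'']+\mathbf{1}[s''=s']-1$ against the three-point marginal $F_{t_1,t_2,t_3}$ and marginalizing back down via (2), (3), and (4). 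Your version buys self-containedness---no probability space or extension theorem is needed, only axioms (1)--(5)---at the cost of the marginalization bookkeeping (note that (3) only deletes the \emph{last} coordinate, so you need the permutation invariance (4) to sum out the middle index, as you acknowledge); the paper's version buys brevity and the transparent identity $d(t_1,t_2)=\mathbb{P}(X(t_1)\neq X(t_2))$, which it reuses repeatedly afterwards (in the Lipschitz lemma and the bAP argument). One trivial point both treatments elide: the three-point marginal presumes $t_1,t_2,t_3$ distinct, but the degenerate cases of the triangle inequality follow at once from $d(t,t)=0$ and non-negativity.
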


\revision{Note that if $F$ consists of  the finite-dimensional distributions for a stochastic process~$X$, then
\[
d(t_1,t_2) = \mathbb{P}(X(t_1) \neq X(t_2)).
\]}

\begin{proof}
First note that $d(t,t) = 1 - \sum_{s \in S} F_{t,t}(s,s) = 0$ by property (2). Also, property (5) implies that $d(t_1,t_2)>0$ if $t_1 \neq t_2$.

Next, property (4) implies $d(t_1,t_2)=d(t_2,t_1)$. 

To check the triangle inequality, we switch to the stochastic process viewpoint. Note that $X(t_1) \not = X(t_3)$ implies at least one of  $X(t_1) \not = X(t_2)$ and  $X(t_2) \not = X(t_3)$. So
\[
d(t_1,t_3) = \mathbb{P}(X(t_1) \not = X(t_3)) \leq 
\mathbb{P}(X(t_1) \not = X(t_2)) + \mathbb{P} (X(t_2) \not = X(t_3)) = d(t_1,t_2) + d(t_2,t_3).
\]
\end{proof}

To view stochastic processes satisfying property (5)  as relational metric structures, with domain being the index set $T$,
 we only need to define for every $k$-tuple of values from $S$ the predicates 
\[
R_{\bar{s}}(\bar{t}) = F_{\bar{t}}(\bar{s}) = \mathbb{P}(X(\bar{t})=\bar{s})
\]
So the language $\mathcal{L}$ consists of the set of predicate symbols $R_{\bar{s}}$ for any finite tuple of values $\bar{s}$, along with metric $d$, which we always interpret by $d(t_1,t_2) = 1 - \sum_{s \in S} R_{s,s}(t_1,t_2)$.

We need to show that each of these predicates is uniformly continuous. 
\begin{lem} \label{lem:stochLipschitz}
For each $k$-tuple $\bar{s}$, the relation $R_{\bar{s}}$ is $1$-Lipschitz   in each of its arguments.
\end{lem}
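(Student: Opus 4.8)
The plan is to show that for any $k$-tuple $\bar s$, changing a single coordinate of the argument $\bar t$ by metric distance $\delta$ changes $R_{\bar s}(\bar t)$ by at most $\delta$; since the relation has $k$ arguments and we want Lipschitz-in-each-argument, I would fix all but one index and estimate the variation in that coordinate. Concretely, suppose $\bar t = (t_1,\ldots,t_k)$ and $\bar t' = (t_1,\ldots,t_{i-1},t_i',t_{i+1},\ldots,t_k)$ differ only in the $i$-th slot. I want to bound $|R_{\bar s}(\bar t) - R_{\bar s}(\bar t')|$ by $d(t_i,t_i')$.

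The cleanest route is to switch to the stochastic-process viewpoint and read each predicate as a probability of an event. We have
\[
R_{\bar s}(\bar t) = \mathbb{P}\bigl(X(t_1)=s_1,\ldots,X(t_i)=s_i,\ldots,X(t_k)=s_k\bigr),
\]
and the corresponding event for $\bar t'$ differs only in replacing the factor $X(t_i)=s_i$ by $X(t_i')=s_i$. Writing $E$ for the conjunction of all the conditions on the unchanged coordinates $j\neq i$, the two quantities are $\mathbb{P}(E \cap \{X(t_i)=s_i\})$ and $\mathbb{P}(E \cap \{X(t_i')=s_i\})$. The symmetric difference of the events $\{X(t_i)=s_i\}$ and $\{X(t_i')=s_i\}$ is contained in the event $\{X(t_i)\neq X(t_i')\}$, because if $X(t_i)=X(t_i')$ then both membership conditions agree. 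Hence
\[
\bigl|R_{\bar s}(\bar t) - R_{\bar s}(\bar t')\bigr| \leq \mathbb{P}\bigl(X(t_i)\neq X(t_i')\bigr) = d(t_i,t_i'),
\]
using the identity for $d$ noted just before Lemma~\ref{lem:stochLipschitz}. This is exactly $1$-Lipschitz dependence on the $i$-th argument, and since $i$ was arbitrary the claim follows.

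I expect the only subtlety to be a careful treatment of the set-theoretic bound $|\mathbb{P}(A\cap C)-\mathbb{P}(B\cap C)| \le \mathbb{P}(A\triangle B)$, which is routine: one writes $\mathbb{P}(A\cap C)-\mathbb{P}(B\cap C) = \mathbb{P}((A\setminus B)\cap C) - \mathbb{P}((B\setminus A)\cap C)$, each term bounded in absolute value by $\mathbb{P}(A\triangle B)$. The one genuine thing to keep honest is that the argument works purely at the level of finite-dimensional distributions, so that the phrase ``$X(t_i)=X(t_i')$'' really refers to the joint distribution $F_{t_i,t_i'}$ and the bound $d(t_i,t_i') = 1 - \sum_s F_{t_i,t_i'}(s,s)$; invoking the stochastic process $X$ is just a convenient reformulation, exactly as the paper permits. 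The main obstacle, such as it is, is notational bookkeeping of the conjunctions over the unchanged coordinates rather than any real difficulty.
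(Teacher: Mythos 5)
Your proposal is correct and follows essentially the same route as the paper's proof: both pass to the stochastic-process viewpoint, fix all coordinates but one, and bound the change in $R_{\bar s}$ by $\mathbb{P}(X(t_i)\neq X(t_i')) = d(t_i,t_i')$ via the observation that the events $\{X(t_i)=s_i\}$ and $\{X(t_i')=s_i\}$ can only disagree on $\{X(t_i)\neq X(t_i')\}$. The paper phrases this as two one-sided inequalities (swapping $t_1$ and $t_1^*$) rather than your single symmetric-difference estimate $|\mathbb{P}(A\cap C)-\mathbb{P}(B\cap C)|\leq \mathbb{P}(A\triangle B)$, but these are the same union-bound argument in different packaging.
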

\begin{proof}
Without loss of generality, consider the first argument. For $t_1, t_1^* \in T$ we have
\begin{eqnarray*}
R_{s_1,\ldots,s_k}(t_1,\ldots,t_k) 
& = & 
 \mathbb{P}( X(t_1)=s_1, \ldots, X(t_k)=s_k) \\
& \leq &  \mathbb{P}( X(t_1^*)=s_1, \ldots, X(t_k)=s_k) + \mathbb{P}(X(t_1^*) \neq X(t_1)) \\
& = & R_{s_1,\ldots,s_k}(t_1^*, \ldots, t_k) + d(t_1,t_1^*),
\end{eqnarray*}
where we have used the fact that $X(t_1)=s_1$ can only happen if at least one of $X(t^*_1)=\revision{s_1}$ or $X(t_1^*) \not = X(t_1)$ occurs.
Switching $t_1$ and $t_1^*$ gives 
\[
R_{s_1,\ldots,s_k}(t_1^*, \ldots, t_k) \leq R_{s_1,\ldots,s_k}(t_1,\ldots,t_k) + d(t_1,t_1^*),
\]
and hence 
\[
|R_{s_1,\ldots,s_k}(t_1^*, \ldots, t_k) - R_{s_1,\ldots,s_k}(t_1,\ldots,t_k)| \leq d(t_1,t_1^*),
\]
\end{proof}

Now we formally define the class of metric structures $\mathcal{SP}$.
\begin{defn} \label{def:SP}
A \revision{\emph{separable non-degenerate $S$-valued stochastic process}} is a separable metric space $(T,d)$ with $k$-ary predicates $R_{\bar{s}}$ for $k\geq 2$, $\bar{s} \in S^k$ such that, if we let $F_{\bar{t}}(\bar{s}) = R_{\bar{s}}(\bar{t})$ for all $k, \bar{s}\in S^k, \bar{t}\in T^k$, then conditions (1) through (4) as well as
\begin{equation*}
(5') \hspace{.5cm} 1- \sum_{s\in S} F_{t_1,t_2}(s,s)= d(t_1,t_2) 
\end{equation*}
hold. We denote the set of all such metric structures by $\mathcal{SP}$.
\end{defn}

We will not actually use the notation $R_{\bar{s}}$  in what follows. Rather, we will stay with  either $F_{\bar{t}}(\bar{s})$ or $X(\bar{t})$ to keep in line with probabilistic notation.
  
 To summarize, any metric structure satisfying Definition~\ref{def:SP} gives a family of finite-dimensional distributions corresponding to a non-degenerate stochastic process on $T$ taking values in $S$. On the other hand, any such non-degenerate stochastic process corresponds to a metric structure of the type in Definition~\ref{def:SP}.
This metric structure view of non-degenerate stochastic processes immediately gives a definition of completion and separability for stochastic processes, namely, whether the space $(T,d)$ with the induced metric $d$ is complete and separable, respectively. 


Let  $\mathcal{K}$ be the class of all finite metric structures in $\mathcal{SP}$.
We will show that $\mathcal{K}$ is a \Fraisse class satisfying bAP.
Note that for any two $n$-tuples in $\mathcal{K}_n$, we have 
\begin{eqnarray*}
d_\infty(\bar a, \bar b)& = & \max_{|A|\leq n, \bar{s} \in S^{|A|}} | R_{\bar{s}}(\bar{a}_A) - R_{\bar{s}}(\bar{b}_A) |
\\
& = &  \max_{|A|\leq n, \bar{s} \in S^{|A|}} |\mathbb{P}(X(\bar{a}_A)=\bar{s}) - \mathbb{P}(X(\bar{b}_A)=\bar{s})|
\end{eqnarray*}
where $A$ runs over all subsets of $\{1, \ldots, n\}$.

Here is our main result for this section.

\begin{thm} \label{thm:stocprocFraisse}
The class $\mathcal{K}$ of all finite non-degenerate stochastic processes satisfies HP, JEP, bAP, PP, and CP, and therefore has a unique \Fraisse limit.
\end{thm}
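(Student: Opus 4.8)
The plan is to verify HP, JEP, bAP, PP and CP for $\mathcal{K}$ and then invoke Theorem~\ref{thm:ultrahomoglimit}. The organizing observation I would use throughout is that an enumerated structure $\bar a\in\mathcal{K}_n$ is completely determined by a single probability distribution $\pi_{\bar a}$ on $S^n$, the joint law of $(X(a_1),\dots,X(a_n))$: by the consistency conditions (2)--(4), every predicate value $R_{\bar s}(\bar a_X)$ and every distance $d(a_i,a_j)$ is a marginal, hence a linear function, of $\pi_{\bar a}$; conversely, every $\pi\in\Delta(S^n)$ is realized by some enumerated structure, with coordinates that are a.s.\ equal under $\pi$ recorded as repeated entries (which $\mathcal{K}_n$ permits). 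HP is then immediate: restricting the finite-dimensional distributions to a subset of the index set preserves (1)--(4) and leaves all distances fixed, so every finite substructure of a member of $\mathcal{K}$ lies in $\mathcal{K}$.

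For JEP, given two finite processes I would first identify any deterministic point of one with a deterministic point of the other carrying the same constant state, and then form the independent coupling of the two joint laws. The point to check is that $\sum_{s}p_sq_s=1$ for probability vectors $p,q$ forces both to be the point mass at a common state; hence, after the identification, no two distinct points are a.s.\ equal, the product process is non-degenerate, and it embeds both given processes.

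bAP is the heart of the argument and the step I expect to be the main obstacle. Given $(\bar a,w)$ and $(\bar a,z)$ with common $\bar a$, let $\mu_{\bar s}$ and $\nu_{\bar s}$ denote the conditional laws of $X(w)$ and $X(z)$ given $X(\bar a)=\bar s$. I would amalgamate by applying, conditionally on each value $\bar s$, the \emph{maximal coupling} of $\mu_{\bar s}$ and $\nu_{\bar s}$; this leaves the marginals $(\bar a,w)$ and $(\bar a,z)$ unchanged, so it is a genuine amalgamation. Under it,
\[
d(w,z)=\mathbb{P}(X(w)\neq X(z))=\frac{1}{2}\sum_{\bar s,\,s}\bigl|\mathbb{P}(X(\bar a)=\bar s,\,X(w)=s)-\mathbb{P}(X(\bar a)=\bar s,\,X(z)=s)\bigr|,
\]
and each of these at most $|S|^{n}$ summands is a difference of two predicate values on corresponding subsets of $(\bar a,w)$ and $(\bar a,z)$, hence is at most $d_\infty((\bar a,w),(\bar a,z))$. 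This gives $d(w,z)\leq\frac{1}{2}|S|^{n}\,d_\infty$, i.e.\ bAP with $c=\frac{1}{2}|S|^{n}$, a constant depending only on $n$. The only delicate point is that when $d_\infty=0$ the coupling forces $X(w)=X(z)$ almost surely, so $w$ and $z$ collapse to a single repeated entry, which is consistent with non-degeneracy.

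For PP and CP I would establish, on this class, the Lipschitz equivalence of $d_\infty$ and $d_\mathcal{K}$. Lemma~\ref{lem:stochLipschitz} gives $d_\infty\leq n\,d_\mathcal{K}$ directly, since in any joint embedding each predicate moves by at most the sum of the displacements of its (at most $n$) arguments. Conversely, coupling $\pi_{\bar a}$ and $\pi_{\bar b}$ maximally as laws on $S^n$ produces a joint embedding with $d(a_i,b_i)\leq\frac{1}{2}\sum_{\bar s}|\pi_{\bar a}(\bar s)-\pi_{\bar b}(\bar s)|\leq\frac{1}{2}|S|^n d_\infty$, so $d_\mathcal{K}\leq\frac{1}{2}|S|^n d_\infty$. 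Thus $(\mathcal{K}_n,d_\mathcal{K})$ is bi-Lipschitz to the quotient presentation of the compact simplex $\Delta(S^n)$, which is complete and separable, giving PP; and CP holds because $\bar a\mapsto P^{\bar a}(\bar a)$ is one of the quantities controlled by $d_\infty$ and hence Lipschitz in $d_\mathcal{K}$. With all five properties verified, Theorem~\ref{thm:ultrahomoglimit} yields that $\mathcal{K}$ is a \Fraisse class with a unique, ultrahomogeneous, limit.
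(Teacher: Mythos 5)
Your proof is correct and, on the decisive points, coincides with the paper's own argument: bAP via the Coupling Lemma applied to the conditional laws of $w$ and $z$ given $X(\bar a)=\bar s$, followed by the total-variation bound; PP and CP via the two-sided Lipschitz comparison of $d_{\mathcal{K}}$ with $d_\infty$ (the lower bound from Lemma~\ref{lem:stochLipschitz}, the upper bound from a maximal coupling of the whole vectors $X(\bar a)$ and $X(\bar b)$). Two divergences are worth noting. First, JEP: the paper derives it from bAP by identifying a single point of each process, whereas you take the independent product coupling and identify, for each state, deterministic points carrying that constant value; your route is actually the more careful one, since identifying arbitrary points requires their one-point distributions (encoded by $R_{s,s'}$ on repeated entries) to agree, and the product coupling violates non-degeneracy exactly in the case you isolate, because $\sum_s p_s q_s = 1$ forces both distributions to be the same point mass. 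Second, a minor counting slip in bAP: the sum $\frac{1}{2}\sum_{\bar s, s}\lvert\cdots\rvert$ has $|S|^{n+1}$ summands ($\bar s \in S^n$ and $s \in S$), not $|S|^n$, so the constant is $\frac{1}{2}|S|^{n+1}$ as in the paper; this is harmless since bAP only requires some constant depending on $n$. Finally, your packaging of PP --- $\mathcal{K}_n$ modulo $d_{\mathcal{K}}$-distance zero is bi-Lipschitz to the compact simplex of probability distributions on $S^n$, with almost-surely-equal coordinates realized as repeated entries --- is a slightly cleaner justification than the paper's appeal to a closed subset of $\mathbb{R}^k$, since it handles automatically the boundary cases where distances between tuple entries degenerate to zero.
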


A key component of the proof of this theorem is the Coupling Lemma of probability theory (see, for example \cite[p.19]{lindvall92}): between any two random variables $X$ and $Y$ taking values in some state space \revision{$U$}, we can define the total variation distance between their distributions as 
\[
d_{TV}(X,Y) = \sup_{A \subseteq \revision{U}} | \mathbb{P}(X \in A) - \mathbb{P}(Y \in A)| = \frac{1}{2} \sum_{\revision{u \in U}}
\left| \mathbb{P}(X = \revision{u}) - \mathbb{P}(Y=\revision{u})\right| .
\]
\revision{The Coupling Lemma asserts the following:  given any two $U$-valued random variables $X$ and $Y$, there are    random variables $\tilde{X}$ and $\tilde{Y}$ defined on the same probability space   so that $\tilde{X}$ has the same distribution as $X$, $\tilde{Y}$ has the same distribution as $Y$, and} 
\[
\mathbb{P}(\tilde{X} \not = \tilde{Y}) = d_{TV}(X,Y).
\]
\revision{In what follows, we will use the Coupling Lemma with $X$ and $Y$ being tuples of random variables of length $n$, and $U$ being $S^n$.
}

%
%
%

\emph{Proof of Theorem \ref{thm:stocprocFraisse}}

HP. This follows immediately.

JEP.  This follows from bAP if we identify a single point in each of the two index sets of the stochastic processes we want to jointly embed.

bAP. Let $X=(\bar a,w)$ and $Y=(\bar a, z)$ be two enumerated structures in $\mathcal{K}_{n+1}$, where $X$ and $Y$ agree on $\bar a$. 
We wish to amalgamate $X$ and $Y$ to obtain $Z=(\bar a, w, z)$. 

To do this, we shift to random variable notation.
$X$ and $Y$ correspond to two random pairs $(\bX_{a},\bX_{w})$ and $(\bY_{a},\bY_z)$ such that $\bX_a$ and $\bY_a$ (taking values in $S^n$) have the same distribution and $\bX_w$ and $\bY_z$ take values in $S$. We will create a random variable $(\bZ_a,\bZ_w,\bZ_z)$ such that  $(\bZ_a,\bZ_w)$ has the same distribution as $(\bX_{a},\bX_{w})$, $(\bZ_a,\bZ_z)$ has the same distribution as $(\bY_{a},\bY_z)$, and we can bound the probability that $\bZ_w$ and $\bZ_z$ are different.

Fix $\overline{s} \in S^n$. We let $\bX_{a|\overline{s}}$ denote the random variable $\bX_w$ conditioned on $\bX_a = \overline{s}$; this is the random variable defined by the distribution
\[ \mathbb{P}[\bX_{a|\overline{s}}= s ] = \frac{\mathbb{P}[(\bX_a , \bX_w) = (\overline{s},s) ]} { \mathbb{P}[\bX_a = \overline{s}]}.\]
Let $\bY_{a|\overline{s}}$ be defined similarly. 

For each value of $\overline{s}$ we apply the Coupling Lemma to $\bX_{a|\overline{s}}$ and $\bY_{a|\overline{s}}$ to obtain the jointly defined random variables $\bZ^X_{a|\overline{s}}$ and $\bZ^Y_{a|\overline{s}}$.  $\bZ^X_{a|\overline{s}}$ has the same distribution as $\bX_{a|\overline{s}}$, $\bZ^Y_{a|\overline{s}}$ has the same distribution as $\bY_{a|\overline{s}}$, and 
\[
\mathbb{P}\left( \bZ^X_{a|\overline{s}} \neq \bZ^Y_{a|\overline{s}}\right)= d_{TV}( \bX_{a|\overline{s}}, \bY_{a|\overline{s}}).
\]

We can assemble $(\bZ_a,\bZ_w,\bZ_z)$  as follows. For each $\overline{s}\in S^n$ define 
\[
\mathbb{P} [ (\bZ_a,\bZ_w,\bZ_z)=(\overline{s},s_1,s_2)]= 
\mathbb{P} [\bX_a= \overline{s}] \mathbb{P} [ \bZ^X_{a|\overline{s}} = s_1, \bZ^Y_{a|\overline{s}}=s_2].
\]

We let the tuple $(a,w,z)$ correspond to the random variable $(\bZ_a,\bZ_w,\bZ_z)$.
It remains to show that we can bound $d(w,z)$. First we re-express the quantity in terms of $\bZ^X_{a|\overline{s}}$ and $\bZ^Y_{a|\overline{s}}$ by conditioning on $\bZ_a$.
\begin{eqnarray*}
d(w,z) & = & \mathbb{P}(\bZ_w \neq \bZ_z)  \\
& = & \sum_{\overline{s}} \mathbb{P}[ \bZ_a = \overline{s} ] \mathbb{P} [
\bZ_w \neq \bZ_z | \bZ_a = \overline{s} ] \\
& = &\sum_{\overline{s}}  \mathbb{P}[ \bZ_a = \overline{s} ] \mathbb{P} [ \bZ^X_{a|\overline{s}} \neq 
\bZ^Y_{a|\overline{s}}].
\end{eqnarray*}
By our use of the Coupling Lemma we then have
\begin{eqnarray*}
d(w,z) & = & \sum_{\overline{s}} \mathbb{P}[ \bX_a = \overline{s} ] d_{TV}( \bX_{a|\overline{s}}, \bY_{a|\overline{s}}) \\
& = & \sum_{\overline{s}} \mathbb{P}[ \bX_a = \overline{s}  ] \frac{1}{2} \sum_{s \in S}  \left|
\mathbb{P}( \bX_{a|\overline{s}}=s ) - \mathbb{P}( \bY_{a|\overline{s}} = s) \right| \\
& \leq &\frac{1}{2} \sum_{\overline{s}} \sum_{s \in S}  \left|
\mathbb{P}[ (\bX_{a},\bX_w)=(\overline{s},s) ] - \mathbb{P}[ (\bY_{a}, \bY_z) =(\overline{s},s)] \right| \\
& \leq & \frac{1}{2} |S|^{n+1} d_\infty( (a,w), (a,z)),
\end{eqnarray*}
as required.

PP. Note that, for each $n$, the conditions for a non-degenerate stochastic process define a closed subspace of $\mathbb{R}^k$ for a sufficiently large $k$.   As in the case of diversities it suffices to show that $d_{\mathcal{K}}$ and $d_\infty$ are Lipschitz equivalent, since $d_\infty$ is Lipschitz equivalent to the Euclidean metric. We will show, for any pair of $n$-tuples $\bar a, \bar b$ that
\[
 \frac{1}{n}  d_\infty(\bar a, \bar b) \leq d_\mathcal{K}(\bar a, \bar b) \leq  |S|^n d_\infty(\bar a, \bar b)
\]

To prove the first inequality, 
suppose that we have $\bar a$ and $\bar b$ embedded together in $(\bar a, \bar b)$ so that 
$\max_{i} \mathbb{P}(X(a_i) \neq X(b_i) ) \leq d_\mathcal{K}(\bar a, \bar b) + \epsilon$ for some $\epsilon>0$.
For any subset $X$ of $\{1, \ldots, n\}$ and any element $\alpha$ of $S^{|X|}$ we have 
\begin{eqnarray*}
| \mathbb{P}(X(a_X) =\alpha) - \mathbb{P} (X(b_X) =\alpha )| &\leq& \mathbb{P}( X(a_X) \neq X(b_X)) \leq \mathbb{P}( X(\bar a) \neq X(\bar b)) \\
& \leq & n \max_i \mathbb{P}(X(a_i) \neq X(b_i) ) \leq n d_\mathcal{K}(\bar a, \bar b) + n \epsilon.
\end{eqnarray*}
Dividing by $n$ and letting $\epsilon$ go to zero gives the first inequality.

%
%

 To prove the second inequality, note that for any $A \subseteq S^n$, we have 
 \[
 |\mathbb{P}(X(\bar a)\in A) - \mathbb{P}(X(\bar b)\in A)| \leq \sum_{\alpha \in A}  |\mathbb{P}(X(\bar a)=\alpha) - \mathbb{P}(X(\bar b)=\alpha)|    \leq |S|^n d_\infty(\bar a, \bar b).
\]
\revision{
So we obtain the following  from the definition of total variation distance:
\[
d_{TV}(X(\bar a),X(\bar b)) \leq |S|^n d_\infty(\bar a, \bar b).
\]
}
 Using the \revision{Coupling Lemma} on the whole random vectors $X(\bar a)$ and $X(\bar b)$, we can find a joint embedding so that
 $\mathbb{P}(X(\bar a) \neq X(\bar b)) \revision{ \leq d_{TV}(X(\bar a),X(\bar b))} \leq |S|^n d_\infty(\bar a, \bar b)$.
Now we have that 
\[
\max_i \mathbb{P}( X(a_i) \neq X(b_i)) \leq \mathbb{P}(X(\bar a) \neq X(\bar b)) \leq |S|^n d_\infty(\bar a, \bar b)
\]
and the inequality is established.

CP. The $n$-ary predicates on $\mathcal{K}$ are, for tuples $\alpha \in S^n$, 
\[
R_\alpha(t_1, \ldots, t_n) = \mathbb{P}(X(t_1)= \alpha_1, \ldots, X(t_n)=\alpha_n).
\]
Let $\bar a$ and $\bar b$ be two structures of length $n$ such that $d_\mathcal{K}(\bar a, \bar b)=\epsilon$. Let them be jointly embedded in $(\bar a, \bar b)$ such that $\max_i d(a_i,b_i) \leq 2 \epsilon$. Then Lemma~\ref{lem:stochLipschitz} implies that
\[
|R_\alpha(\bar a) - R_\alpha(\bar b)| \leq 2 n \epsilon.
\]
as required.
\qed
\\


Now we will translate the existence of a \Fraisse limit for the class $\mathcal{K}$ into the language of stochastic processes. We will need to introduce some  terminology that is non-standard in the context of stochastic processes.
Let $X_t$ for $t \in T$ and $Y_u$ for $u \in U$ be two $S$-valued stochastic processes. Let $\phi \colon T \rightarrow U$ be given. The map $\phi$ \emph{embeds} $(T,X)$ in $(U,Y)$ if the finite-dimensional distributions of $Y_{\phi(t)}, t \in T$ are identical to those of $X_t, t \in T$.  Furthermore, $\phi$ is an \emph{isomorphism} if $\phi$ is onto. An \emph{automorphism} is an isomorphism from a stochastic process to itself.
A stochastic process is \emph{non-degenerate} if $d(t_1,t_2) := \mathbb{P}(X(t_1) \not = X(t_2) )>0$ is non-zero for all $t_1 \not = t_2$.  A stochastic process is \emph{separable} if $(T,d)$ is a separable metric space. (This conflicts with the notion of separability in the stochastic processes literature.)
A stochastic process $X_t, t \in T$  is \emph{finite} if $T$ is finite.

A stochastic process is \emph{universal} if any finite \revision{$S$-valued} stochastic process can be embedded in it.
A map $\phi$ is a \emph{partial isomorphism} from $X_t, t \in T$ to $Y_u, u \in U$ if $\phi \colon T_0 \rightarrow U$ is an embedding for some $A_0 \subseteq T$. We say $\phi$ is a finite partial isomorphism if $A_0$ is finite.
A stochastic processes is \emph{ultrahomogeneous} if any finite partial isomorphism from the process to itself can be extended to an automorphism.
  
\begin{cor}
There is a separable universal ultrahomogeneous process $\mathbb{X}_t, t \in \mathbb{\mathbb{T}}$ that is unique up to isomorphism. 
\end{cor}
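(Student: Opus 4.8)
The plan is to transfer the statement into the metric-structure setting and then invoke Theorem~\ref{thm:ultrahomoglimit}. The key observation is that the correspondence between non-degenerate $S$-valued stochastic processes and $\mathcal{L}$-structures in $\mathcal{SP}$ (Definition~\ref{def:SP}) is a \emph{dictionary}: embeddings of stochastic processes (preservation of finite-dimensional distributions) are exactly embeddings of the associated metric structures, isomorphisms correspond to isomorphisms, and partial isomorphisms correspond to partial isomorphisms. So first I would make this dictionary explicit, checking that the notions of \emph{universal}, \emph{separable}, \emph{ultrahomogeneous}, and \emph{finite partial isomorphism} just defined for stochastic processes coincide term-for-term with the corresponding structural notions.

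Next, Theorem~\ref{thm:stocprocFraisse} establishes that the class $\mathcal{K}$ of all finite non-degenerate stochastic processes satisfies HP, JEP, bAP, PP, and CP. By Theorem~\ref{thm:ultrahomoglimit}, $\mathcal{K}$ is then a \Fraisse class whose limit $\mathfrak{M}$ is ultrahomogeneous, universal for separable $\mathcal{K}$-structures, and unique up to isomorphism. Reading $\mathfrak{M}$ back through the dictionary gives a separable stochastic process $\mathbb{X}_t$, $t \in \mathbb{T}$. I would then spell out each required property of $\mathbb{X}$: universality follows because every finite $S$-valued stochastic process is (up to the non-degeneracy reduction below) a member of $\mathcal{K}$ and hence embeds into $\mathfrak{M}$; ultrahomogeneity is immediate from the ultrahomogeneity of $\mathfrak{M}$ via the matching of partial isomorphisms; uniqueness up to isomorphism transfers directly since isomorphism of processes is isomorphism of structures.

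The main obstacle, and the one subtlety worth handling with care, is the \emph{non-degeneracy} gap between the theorem's class $\mathcal{K}$ and the corollary's claim of universality for \emph{any} finite $S$-valued stochastic process. The structures in $\mathcal{K}$ satisfy condition~(5), i.e.\ $\mathbb{P}(X(t_1)=X(t_2))<1$ for $t_1\neq t_2$, but an arbitrary finite process may have $X(t_1)=X(t_2)$ almost surely for distinct indices. As indicated in the remark following the introduction of condition~(5), the remedy is to pass to the quotient that identifies indices $t_1,t_2$ whenever $\mathbb{P}(X(t_1)=X(t_2))=1$; this quotient is a \emph{non-degenerate} finite process in $\mathcal{K}$ with the same finite-dimensional distributions on representatives, and the original process embeds into $\mathfrak{M}$ by composing the quotient map with the embedding of the quotient. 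I would state this identification as the one place where the passage from $\mathcal{K}$ to all finite processes is not purely formal, and confirm it does not disturb separability.

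With the dictionary fixed and the non-degeneracy reduction in place, the proof is a short translation: apply Theorem~\ref{thm:stocprocFraisse} and Theorem~\ref{thm:ultrahomoglimit} to obtain $\mathfrak{M}$, name the corresponding process $\mathbb{X}_t$, $t\in\mathbb{T}$, and read off separability, universality, ultrahomogeneity, and uniqueness from the structural statement. I expect the entire argument to fit in a few lines once the correspondence is articulated.
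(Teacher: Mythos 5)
Your proposal is correct and follows essentially the same route as the paper, which states the corollary without further argument as an immediate consequence of Theorem~\ref{thm:stocprocFraisse} and Theorem~\ref{thm:ultrahomoglimit}, after setting up exactly the dictionary you describe between stochastic-process notions and metric-structure notions. Your extra care about degenerate processes (quotienting indices with $\mathbb{P}(X(t_1)=X(t_2))=1$, using that the paper's embeddings need not be injective) is sound and matches the paper's own remark following condition~(5), just made explicit.
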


%
%
%

\section{Diversities} 

%

A {\em diversity} \cite{Bryant12} is a pair $(X,\delta)$ where $X$ is a set and $\delta$ is a function from the finite subsets of $X$ to $\mathbb{R}$ satisfying  
\begin{quotation}
\noindent (D1) $\delta(A) \geq 0$, and $\delta(A) = 0$ if 
and only if 
$|A|\leq 1$. \\
(D2) If $B \neq \emptyset$ then $\delta(A\cup B) + \delta(B \cup C) \geq \delta(A \cup C)$
\end{quotation}
for all finite $A, B, C \subseteq X$.  
Diversities     form an extension of the  concept of a metric space. \revision{Property (D2) is the \emph{triangle inequality} and is the analogue of the triangle inequality for metric spaces.} Every diversity has an {\em induced metric}, given by $d(a,b) = \delta(\{a,b\})$ for all $a,b \in X$. Note also that  $\delta$ is {\em monotonic}: $A \subseteq B$ implies $\delta(A) \leq \delta(B)$. Also $\delta$ is {\em subadditive on sets with nonempty intersection}: $\delta(A \cup B) \leq \delta(A) + \delta(B)$ when $A \cap B \neq \emptyset$ \cite[Prop. 2.1]{Bryant12}. \revision{Monotonicity and subadditivity on overlapping sets are also sufficient to establish the triangle inequality: If $B\neq \emptyset$,
\begin{equation}\label{eqn:prooftri}
\delta(A \cup C ) \leq \delta( A \cup B \cup C) \leq \delta(A \cup B) + \delta(B \cup C).
\end{equation}}
\revision{Just as a semimetric generalizes a metric space by allowing $d(x,y)=0$ for $x \neq y$, we define a \emph{semidiversity} to be a pair $(X,\delta)$ that satisfy (D1) and (D2) above except that we may have $\delta(A)=0$ for $|A|>1$.}


In \cite{bryant2018} we constructed the diversity analog  $(\mathbb{U}, \delta_{\mathbb{U}})$ of the Urysohn metric space. It is determined uniquely by being universal for separable diversities, and  ultrahomogeneous in the sense that isomorphic finite subdiversities are  automorphic. We also showed that the induced metric space of $(\mathbb{U}, \delta_{\mathbb{U}})$ is the Urysohn metric space. Our method of constructing this Urysohn diversity in \cite{bryant2018} was analogous to Kat{\v{e}}tov's  construction of the Urysohn metric space \cite{katetov1986universal}. 

Here we demonstrate the existence of $(\mathbb{U}, \delta_{\mathbb{U}})$ via Ben Yaacov's theory of metric  \Fraisse limits \cite{yaacov2015fraisse}, as we described in Section~\ref{sec:benyaacov}, along with our results in Section~\ref{sec:exact} is order to prove that the \Fraisse limit is ultrahomogeneous and not just approximately ultrahomogeneous.

%
%


In order to study diversities using Ben Yaacov's theory, it is necessary to describe them as metric structures. Every diversity immediately has a metric space associated with it, the induced metric.  The diversity function $\delta$ takes a variable number of distinct arguments, so this does not immediately fit into model theory. Hence we define $\delta_k$ as a predicate for $k \geq 1$. We define $\delta_k$ of a tuple in $X^k$ to be $\delta$ of the set of values that the tuple takes. For example,
\[
\delta_3(x,y,x)= \delta(\{x,y\})
\]
for all $x,y \in X$.
In this way, diversities are relational metric structures with a countable number of predicates, one with arity $m$ for every $m\geq 2$.

What axioms do diversities satisfy as metric structures?  In the following all tuples are assumed to be of non-zero length. The set function $\delta$ being a diversity is equivalent to 
\begin{enumerate}
\item $\delta_1(x)=0$ for all points $x \in X$.
\item $\delta_k$ is permutation invariant.
\item \revision{ If $\delta_k(a_1,\ldots,a_k)=0$ then $a_1=\cdots=a_k$.}
\item For all $a_1,\ldots, a_k \in X$,
\(
\delta_{k+1}( a_1, \ldots, a_{k-1}, a_k, a_k ) = \delta_k( a_1, \ldots, a_{k-1}, a_k).
\)
\item $\delta_{k+1}(\bar{a},b) \geq \delta_k(\bar{a})$ for all $\bar a \in X^k$, $b \in X$.
\item For all tuples $\bar{a}, \bar{b}, \bar{c}$ with lengths $j,k,\ell$, if $k \geq 1$
\[
\delta_{j+\ell}( \bar{a}, \bar{c}) \leq \delta_{j+k}(\bar{a},\bar{b}) + \delta_{k+\ell}(\bar{b},\bar{c}).
\]
\end{enumerate}
Note that these conditions imply that $d \equiv \delta_2$ is a metric on $X$.
To be a metric structure, we need to confirm that each predicate is uniformly  continuous, as  shown in the following lemma.

\begin{lem} \label{lem:1-Lipschitz} Let $(X, \delta)$ be a diversity. For each $n$, the function $\delta^{(k)}$ is 1-Lipschitz in each argument. \end{lem}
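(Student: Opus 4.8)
The plan is to reduce the whole statement to one carefully chosen instance of the triangle inequality (D2). By permutation invariance (structure axiom (2)), it suffices to prove that $\delta_k$ is $1$-Lipschitz in its first argument, so I fix $a_2,\ldots,a_k \in X$ and aim to show that for all $a_1,a_1^* \in X$,
\[
\bigl|\delta_k(a_1^*,a_2,\ldots,a_k)-\delta_k(a_1,a_2,\ldots,a_k)\bigr| \leq d(a_1,a_1^*),
\]
where $d(a_1,a_1^*)=\delta_2(a_1,a_1^*)$ is the induced metric. The case $k=1$ is trivial since $\delta_1\equiv 0$, so I assume $k\geq 2$.

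First I would pass from tuples to underlying sets, using that $\delta_k(a_1,\ldots,a_k)=\delta(\{a_1,\ldots,a_k\})$ by definition of the predicate. The key step is a single application of (D2) with the sets $\{a_1^*\}$, $\{a_1\}$, and $\{a_2,\ldots,a_k\}$ in the roles of $A$, $B$, $C$ respectively; since the middle set $\{a_1\}$ is nonempty, (D2) applies and yields
\[
\delta(\{a_1^*,a_1\}) + \delta(\{a_1,a_2,\ldots,a_k\}) \geq \delta(\{a_1^*,a_2,\ldots,a_k\}),
\]
that is, $\delta_k(a_1^*,a_2,\ldots,a_k) \leq d(a_1,a_1^*) + \delta_k(a_1,a_2,\ldots,a_k)$. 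Swapping the roles of $a_1$ and $a_1^*$ gives the reverse bound, and together these two inequalities establish the Lipschitz estimate in the first coordinate. Invoking permutation invariance then transfers the estimate to every coordinate.

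The only point requiring care — and the closest thing to an obstacle here — is checking that this set-theoretic bookkeeping is robust to coincidences among the arguments. If $a_1$ or $a_1^*$ already occurs among $a_2,\ldots,a_k$, or if entries repeat, one must confirm that the three sets $\{a_1^*,a_1\}$, $\{a_1,a_2,\ldots,a_k\}$, $\{a_1^*,a_2,\ldots,a_k\}$ still coincide with the formal unions $A\cup B$, $B\cup C$, $A\cup C$ of the chosen sets, so that the application of (D2) is legitimate. This holds precisely because $\delta_k$ depends only on the \emph{set} of values taken by its tuple, so no case analysis is needed and the argument goes through uniformly.
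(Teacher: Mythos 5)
Your proof is correct and takes essentially the same route as the paper's: a single application of the diversity triangle inequality (D2) with the pair $\{a_1,a_1^*\}$ mediating yields the one-sided bound $\delta_k(a_1^*,a_2,\ldots,a_k)\leq \delta_k(a_1,a_2,\ldots,a_k)+d(a_1,a_1^*)$, and swapping $a_1$ and $a_1^*$ gives the two-sided Lipschitz estimate (the paper works with a general $i$th coordinate where you invoke permutation invariance, a cosmetic difference). Your closing remark that $\delta_k$ depends only on the underlying set of its arguments, so repetitions and coincidences require no case analysis, is a point the paper leaves implicit but is correct and does not alter the argument.
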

\begin{proof}
Consider varying the $i$th argument of $\delta^{(k)}$ from $x_i$ to $x_i'$. We know from the triangle inequality that 
\begin{eqnarray*}
\delta^{(k)}(x_1, \ldots, x_i, \ldots, x_k) & = & \delta(\{x_1, \ldots, x_i, \ldots, x_k\}) \\
 & \leq & \delta(\{x_1, \ldots, x_i', \ldots, x_k\}) + \delta(\{ x_i, x_i'\}) \\
 & = & \delta^{(k)}(x_1, \ldots, x_i', \ldots, x_k) + d(x_i,x_i').
\end{eqnarray*}
Similarly, $\delta^{(k)}(x_1, \ldots, x_i', \ldots, x_k) \leq \delta^{(k)}(x_1, \ldots, x_i, \ldots, x_k) +
d(x_i,x_i')$. So 
\[
|  \delta^{(k)}(x_1, \ldots, x_i, \ldots, x_k) - \delta^{(k)}(x_1, \ldots, x_i', \ldots, x_k)| \leq d(x_i, x_i')
\]
as required.
\end{proof}

Since these are closed conditions, for any finite set $X$ with $|X|=n$, the set of diversities can be viewed as a closed subset of $\mathbb{R}^{k}$ for $k = n + n^2 + \cdots + n^n$.

Diversities have already been given a definition of completeness and separability in \cite{Poelstra13}: a diversity is complete if its induced metric space is complete and it is separable if its induced metric space is separable.  Fortunately, these correspond precisely with the definitions one would get   with viewing diversities as metric structures, so the \Fraisse limit will give us exactly what we want.

We define a special one-point amalgamation of diversities that yields the bounded Amalgamation Property (bAP).

\begin{defn} \label{defn:ma}
Let $(X \cup \{z_1\},\delta)$ and $(X \cup \{z_2\},\delta)$ be two diversities that agree on $X$.  The amalgamation of the diversities is defined on $X \cup \{z_1,z_2\}$ by
\begin{multline}
\delta(A \cup \{z_1, z_2\}) =  \\ \max \left[
\sup_{B \subseteq X} \delta( A \cup B \cup \{z_1\} ) - \delta( B \cup \{z_2\}) , \sup_{C \subseteq X} \delta( A \cup C \cup \{z_2\}) - \delta(C \cup \{z_1\})
\right]
\end{multline}
\end{defn}

The idea of this definition of the amalgamation is to obtain the minimal diversity that extends both the diversities on $X \cup \{z_1\}$ and $X \cup \{z_2\}$. To see this,
note that however we define the amalgamation, the triangle inequality requires
\[
\delta(A \cup \{z_1, z_2\}) \geq  \delta( A \cup B \cup \{z_1\} ) - \delta( B \cup \{z_2\})
\]
for all $B  \subseteq X$ and 
\[
\delta(A \cup \{z_1, z_2\}) \geq \delta( A \cup C \cup \{z_2\}) - \delta(C \cup \{z_1\})
\]
for all $C \subseteq X$. So the value of $\delta(A \cup \{z_1,z_2\})$ can not be less than the one we have defined.

%
\begin{lem}
The amalgamation given by Def.\ \ref{defn:ma} is a diversity on $X \cup \{z_1,z_2\}$.
\end{lem}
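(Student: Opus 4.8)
The plan is to verify the two diversity axioms (D1) and (D2) for the function $\delta$ defined in Definition~\ref{defn:ma}, noting first that on every finite subset of $X\cup\{z_1,z_2\}$ containing at most one of $z_1,z_2$ the value is inherited from one of the two given diversities (which agree on $X$), so only sets containing \emph{both} $z_1$ and $z_2$ are governed by the new formula. The remarks preceding the lemma already show that the formula is the smallest value consistent with the triangle inequality, so those displayed bounds are forced \emph{lower} bounds; the whole content of the proof is therefore the reverse direction, namely that the combined $\delta$ is consistent, i.e. satisfies (D1) and (D2). For (D2) I would not check the triangle inequality directly; instead I would establish \emph{monotonicity} and \emph{subadditivity on overlapping sets} for the combined $\delta$ and then invoke \eqref{eqn:prooftri}, which derives (D2) from these two properties.

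The axiom (D1) and monotonicity are the easy part. Non-negativity of $\delta(A\cup\{z_1,z_2\})$ follows by taking $B=\emptyset$ in the first supremum of Definition~\ref{defn:ma}, giving the lower bound $\delta(A\cup\{z_1\})\ge 0$; the same substitution (and its symmetric counterpart) shows $\delta(A\cup\{z_1,z_2\})>0$ whenever $A\neq\emptyset$. The only delicate point of (D1) is $\delta(\{z_1,z_2\})$, which equals $\max(\sup_B g(B),-\inf_B g(B))$ for $g(B)=\delta(B\cup\{z_1\})-\delta(B\cup\{z_2\})$ and is therefore positive unless $z_1,z_2$ are indistinguishable over $X$ (in that degenerate case one obtains a semidiversity and simply identifies the two points). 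Monotonicity in $A$ is immediate, since enlarging $A$ enlarges every set $A\cup B\cup\{z_1\}$ appearing in the formula while leaving the subtracted terms fixed; monotonicity across the boundary (from a set omitting $z_2$ to one containing both) again follows from the $B=\emptyset$ lower bound.

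The real work, and the main obstacle, is subadditivity $\delta(S\cup T)\le\delta(S)+\delta(T)$ for $S\cap T\neq\emptyset$. When $S\cup T$ omits one of $z_1,z_2$ this is just subadditivity inside a single given diversity, so I would split into cases according to how $z_1,z_2$ are distributed between $S$ and $T$. In each case the strategy is the same: expand the relevant values of the combined $\delta$ via the formula, bound a generic supremum term such as $\delta(S'\cup T'\cup B\cup\{z_1\})-\delta(B\cup\{z_2\})$ by peeling $T'$ off using the \emph{triangle inequality of one of the original diversities} through a shared point, and then recognise the two resulting pieces as (i) a term already bounded by $\delta(S)$ through a defining supremum, e.g. $\delta(S'\cup B\cup\{z_1\})-\delta(B\cup\{z_2\})\le\delta(S'\cup\{z_1,z_2\})$, and (ii) a term bounded by $\delta(T)$. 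The choice of bridge is dictated by the case: a shared point of $X$ when $S$ and $T$ meet inside $X$, and the point $z_1$ (or $z_2$) when they meet only there.

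The hardest sub-case is when $S$ and $T$ overlap in a single special point, say $S\cap T=\{z_1\}$ with $z_2\in S$ but $z_2\notin T$, so the shared point is not available inside either original diversity as an $X$-point. Here peeling $T'$ through $z_1$ handles the $z_1$-supremum of $\delta(S\cup T)$ directly, but the $z_2$-supremum term $\delta(S'\cup T'\cup C\cup\{z_2\})-\delta(C\cup\{z_1\})$ cannot be split through $z_2$ without losing a term. The trick I would use is to bound it by applying the defining supremum for $\delta(S)=\delta(S'\cup\{z_1,z_2\})$ at the \emph{augmented} index $T'\cup C$, giving
\[
\delta(S'\cup T'\cup C\cup\{z_2\})\le \delta(S'\cup\{z_1,z_2\})+\delta(T'\cup C\cup\{z_1\}),
\]
and then collapsing $\delta(T'\cup C\cup\{z_1\})-\delta(C\cup\{z_1\})\le\delta(T'\cup\{z_1\})$ by subadditivity with overlap $z_1$ in the first diversity; this yields exactly $\delta(S)+\delta(T)$. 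I expect that once this representative case is handled, the remaining distributions of $z_1,z_2$ follow by the same two moves (peel with the right bridge, then re-collapse) together with the symmetry $z_1\leftrightarrow z_2$, $S\leftrightarrow T$. Finally, I would note that all the suprema are finite — automatic in the intended application, where $X$ is finite and each supremum is a maximum over finitely many subsets.
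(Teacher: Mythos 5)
Your proposal is correct and follows essentially the same route as the paper's proof: both reduce (D2) to monotonicity plus subadditivity on overlapping sets via \eqref{eqn:prooftri}, and your two key moves in the hardest case --- applying the defining supremum for $\delta(S'\cup\{z_1,z_2\})$ at the augmented index $T'\cup C$, then collapsing $\delta(T'\cup C\cup\{z_1\})-\delta(C\cup\{z_1\})\leq\delta(T'\cup\{z_1\})$ --- are exactly the paper's ``add and subtract $\delta(A_1\cup C\cup\{z_1\})$'' step. Your additional care with (D1), observing that $\delta(\{z_1,z_2\})=0$ can occur when the two one-point extensions agree on every $B\subseteq X$ (giving a semidiversity, with the two points then identified), handles a degenerate case that the paper's proof leaves implicit.
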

\begin{proof}
To establish that $\delta$ is a diversity, it suffices to prove  
\begin{enumerate}[(a)]
\item monotonicity: $\delta(A) \leq \delta(B)$ when $A \subseteq B$, and 
\item subadditivity on overlapping sets: $A \cap B \not = \emptyset$ implies $\delta(A \cup B) \leq \delta(A) + \delta(B)$,
\end{enumerate}
\revision{see \eqref{eqn:prooftri}}.

Monotonicity with respect to $A$ follows easily from where $A$ appears in the definition. To see $\delta(A \cup \{z_1, z_2\}) \geq \delta(A \cup \{z_1\})$, just let $B = \emptyset$, and likewise for $\delta(A \cup \{z_1, z_2\}) \geq \delta(A \cup \{z_2\})$ with $C = \emptyset$.

For subadditivity on overlapping sets, start with the sets $A_1 \cup\{z_1\}$ and $A_2 \cup \{z_2\}$ where $A_1 \cap A_2 \neq \emptyset$.  Let $\epsilon >0$ be given.  Without loss of generality, suppose we have a set $B$  such that 
\[
\delta( A_1 \cup A_2 \cup \{z_1,z_2\}) -\epsilon \leq \delta( A_1 \cup A_2 \cup B \cup \{z_1\}) - \delta( B \cup \{z_2\}).
\]
The triangle inequality gives $\delta( A_1 \cup A_2 \cup B \cup \{z_1\}) \leq \delta(A_1 \cup \{z_1\}) + \delta( A_2 \cup B)$, and so
\[
\delta( A_1 \cup A_2 \cup \{z_1,z_2\}) -\epsilon \leq   \delta(A_1 \cup \{z_1\}) + \delta( A_2 \cup B) - \delta( B \cup \{z_2\}).
\]
The triangle inequality again gives $\delta( A_2 \cup B) \leq \delta(A_2 \cup \{z_2\}) + \delta(B \cup \{z_2\})$ and so
\[
\delta( A_1 \cup A_2 \cup \{z_1,z_2\}) -\epsilon \leq   \delta(A_1 \cup \{z_1\}) + \delta( A_2 \cup \{z_2\}).
\]
Since $\epsilon>0$ was arbitrary, we have $\delta( A_1 \cup A_2 \cup \{z_1,z_2\})  \leq   \delta(A_1 \cup \{z_1\}) + \delta( A_2 \cup \{z_2\})$.

Next,  consider the sets $A_1 \cup \{z_1\}$ and $A_2 \cup \{z_1, z_2 \}$, where $A_1$ and $A_2$ do not necessarily intersect.
Let $\epsilon >0$ be given.  Suppose there exists $B \subseteq X$ such that
\[
\delta(A_1 \cup A_2 \cup \{z_1, z_2\}) - \epsilon \leq \delta( A_1 \cup A_2 \cup B \cup \{z_1\}) - \delta(B \cup \{z_2\}).
\]
The triangle \revision{inequality} gives $\delta(A_1 \cup A_2 \cup B \cup \{z_1\}) \leq \delta(A_1 \cup \{z_1\}) + \delta(A_2 \cup B \cup \{z_1\})$ and so
\begin{eqnarray*}
\delta(A_1 \cup A_2 \cup \{z_1, z_2\}) -\epsilon & \leq & \delta(A_1 \cup \{z_1\}) + \delta(A_2 \cup B \cup \{z_1\})  - \delta(B \cup \{z_2\}) \\
& \leq & \delta(A_1 \cup \{z_1\}) + \delta( A_2 \cup \{z_1, z_2 \})
\end{eqnarray*}
where we have used the definition of $\delta( A_2 \cup \{z_1, z_2 \})$.

On the other hand, suppose there exists a $C \subseteq X$ such that 
\begin{equation*} \label{eqn:triangleAltern}
\delta(A_1 \cup A_2 \cup \{z_1, z_2\}) - \epsilon \leq \delta( A_1 \cup A_2 \cup C \cup \{z_2\}) - \delta(C \cup \{z_1\}).
\end{equation*}
Add and subtract $\delta(A_1 \cup C \cup \{z_1\})$ from the left-hand side to get
\begin{eqnarray*}
\delta(A_1 \cup A_2 \cup \{z_1, z_2\}) - \epsilon &  \leq & \delta( A_1 \cup A_2 \cup C \cup \{z_2\}) - \delta( A_1 \cup C  \cup \{z_1\}) + 
\\ & &  \delta( A_1 \cup C  \cup \{z_1\})- \delta(C \cup \{z_1\}) \\
& \leq & \delta( A_2 \cup \{z_1,z_2\}) + \delta( A_1 \cup \{z_1\}) 
\end{eqnarray*}
where we have used the definition of the amalgamation and the fact that 
\[
\delta( A_1 \cup C  \cup \{z_1\})- \delta(C \cup \{z_1\} \leq  \delta( A_1 \cup \{z_1\}) 
\]
by the triangle inequality.
\end{proof}

\begin{thm} \label{thm:findivfrai}
The class of finite diversities viewed as relational metric structures satisfies HP, JEP, bAP, PP, CP, and is therefore a \Fraisse class with an ultrahomogeneous limit.
\end{thm}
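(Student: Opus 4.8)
The plan is to verify the five hypotheses HP, JEP, bAP, PP, CP directly and then invoke Theorem~\ref{thm:ultrahomoglimit} to conclude that the class is a \Fraisse class with an ultrahomogeneous limit. HP is immediate: restricting a diversity to a subset of its points again satisfies (D1) and (D2), so every finite substructure of a finite diversity is a finite diversity.

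For bAP I would use the explicit one-point amalgamation of Definition~\ref{defn:ma}, which the preceding lemma already shows is a diversity. Writing $z_1, z_2$ for the two non-common points and $X$ for the common part enumerated by $\bar a$, the only predicate values on which $(\bar a, z_1)$ and $(\bar a, z_2)$ can differ are the $\delta(B \cup \{z_i\})$ for $B \subseteq X$, so $d_\infty$ reduces to $\max_{B \subseteq X} |\delta(B \cup \{z_1\}) - \delta(B \cup \{z_2\})|$. Taking $A = \emptyset$ in Definition~\ref{defn:ma} then gives
\[
d(z_1,z_2) = \delta(\{z_1,z_2\}) \leq \max_{B \subseteq X} \bigl| \delta(B \cup \{z_1\}) - \delta(B \cup \{z_2\}) \bigr| = d_\infty\bigl((\bar a, z_1),(\bar a, z_2)\bigr),
\]
so bAP holds with the optimal constant $c = 1$. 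JEP then follows from bAP by identifying one point of each of two finite diversities and amalgamating, exactly as in the stochastic-process case.

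For PP and CP my plan is to establish the Lipschitz equivalence of $d_\mathcal{K}$ and $d_\infty$ on $\mathcal{K}_n$, which reduces both to elementary facts about the finite-dimensional space of diversities on $n$ labelled points: a closed, hence complete and separable, subset of $\RR^k$ on which $d_\infty$ is essentially a sup-metric. One direction, $d_\infty \leq n\, d_\mathcal{K}$, comes straight from Lemma~\ref{lem:1-Lipschitz}: in any joint embedding realizing $d_\mathcal{K}(\bar a, \bar b)$ up to $\epsilon$, each predicate $\delta_m$ changes by at most the sum of the $m \leq n$ coordinate displacements, hence by at most $n(d_\mathcal{K}(\bar a, \bar b) + \epsilon)$. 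The same estimate yields CP, since it gives $|\delta_n(\bar a) - \delta_n(\bar b)| \leq 2n\, d_\mathcal{K}(\bar a, \bar b)$, so every predicate is continuous on $(\mathcal{K}_n, d_\mathcal{K})$.

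The hard part will be the reverse inequality $d_\mathcal{K} \leq c\, d_\infty$, since this requires producing, from two enumerated diversities whose predicate values are uniformly close, a single common diversity in which all corresponding points $a_i, b_i$ are \emph{simultaneously} close. I would build this common extension as a minimal (``glued'') diversity over the correspondence $a_i \leftrightarrow b_i$, either by iterating the one-point amalgamation of Definition~\ref{defn:ma} coordinate by coordinate or by writing down the analogous minimal extension for the whole correspondence at once, and then verifying monotonicity and subadditivity on overlapping sets exactly as in the preceding lemma. The delicate point is controlling the accumulation of error across the $n$ coordinates so that the resulting constant $c$ depends only on $n$; once this bound is in hand, $d_\mathcal{K}$ and $d_\infty$ are Lipschitz equivalent, PP follows, and Theorem~\ref{thm:ultrahomoglimit} delivers the ultrahomogeneous limit.
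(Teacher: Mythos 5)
Your proposal follows essentially the same route as the paper: HP is dismissed identically; bAP is obtained from the minimal amalgamation of Definition~\ref{defn:ma} with constant $c=1$ (the paper computes, just as you do via $A=\emptyset$, that $\delta(\{z_1,z_2\})=\sup_{B\subseteq X}\left|\delta(B\cup\{z_1\})-\delta(B\cup\{z_2\})\right|$); and PP and CP are reduced to the Lipschitz equivalence of $d_{\mathcal{K}}$ and $d_\infty$ on the closed, separable subset of $\RR^k$ formed by enumerated diversities, with the direction $d_\infty\leq n\,d_{\mathcal{K}}$ and the CP estimate both coming from Lemma~\ref{lem:1-Lipschitz} exactly as you say. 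Two remarks on where you diverge. For JEP the paper gives a direct construction (disjoint union of $(X,\delta_X)$ and $(Y,\delta_Y)$, setting $\delta_Z(A)$ equal to the largest value $k$ attained by either diversity whenever $A$ meets both parts) rather than routing through bAP; your route by identifying one point is also sound, but note that bAP as stated is only a one-point amalgamation of enumerated structures, so you must first invoke the paper's observation that bAP implies full AP by induction before amalgamating two arbitrary finite diversities over a single common point. More importantly, the ``delicate point'' you flag --- accumulation of error across the $n$ coordinates in the reverse inequality --- does not in fact arise; this is the content of the paper's Lemma~\ref{lem:metricequiv}. One iterates the one-point amalgamation exactly as in your first suggested option: identify $a_1$ with $b_1$, then amalgamate $(a_1,b_1,\ldots,a_i,b_i,a_{i+1})$ with $(a_1,b_1,\ldots,a_i,b_i,b_{i+1})$ at each stage. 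Since the two tuples agree on the entire previously built structure, the only predicate discrepancies involve exactly one of $a_{i+1},b_{i+1}$ and remain bounded by $d_\infty(\bar a,\bar b)$, so bAP with $c=1$ gives $d(a_{i+1},b_{i+1})\leq d_\infty(\bar a,\bar b)$ afresh at every step, with no compounding; the result is $d_{\mathcal{K}}(\bar a,\bar b)\leq d_\infty(\bar a,\bar b)$ with constant $1$, independent of $n$. With that bound in hand, your appeal to Theorem~\ref{thm:ultrahomoglimit} completes the proof exactly as the paper does.
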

\begin{proof}
\revision{To obtain HP, observe that the diversity axioms are just equalities and inequalities that hold for all points in the diversity, so taking a subset of the diversity cannot violate any of the axioms.}

To obtain JEP, for any two finite diversities $(X,\delta_X)$ and $(Y,\delta_Y)$, let $Z$ be the disjoint union of $X$ and $Y$. Let $k$ be the largest value taken by either of the diversities $\delta_X$ and $\delta_Y$. Let $\delta_Z$ be defined as the extension of $\delta_X$ and $\delta_Y$  such that $\delta_Z(A)=k$ for any set $A$ with nonzero intersection with both $X$ and $Y$. It is straightforward to check that $(Z,\delta_Z)$ is a diversity.

To show bAP, we apply the bounded amalgamation in Def.\ \ref{defn:ma} to the two tuples $(\bar a, z_1)$ and $(\bar a, z_2)$ to get $(\bar a, z_1, z_2)$. Within this tuple,
we have that 
\begin{eqnarray*}
d(z_1,z_2)  =  \delta(\{z_1,z_2\})  
 =  \sup_{A \subseteq \bar a} | \delta( A \cup \{z_1\}) - \delta( A \cup \{z_2\}) | 
\end{eqnarray*}
giving $d(z_1,z_2) \leq d_\infty( (\bar a,z_1), (\bar a,z_2))$, as required.

To show PP:  The set of all enumerated diversities on $n$ points can be viewed as a subset of $\R^k$ for some finite $k$. This subset is closed and separable under the Euclidean metric, since it is the set of points that satisfies a family of non-strict inequalities.  So this subset of $\R^{k}$ is a Polish space.
 We just need to show that for each $n$, $d_{\mathcal{K}}$ is Lipschitz equivalent to the Euclidean metric. 
 Since the Euclidean metric is Lipschitz  equivalent to $d_\infty$, it suffices to show that $d_\mathcal{K}$ is Lipschitz equivalent to $d_\infty$ which is the content of Lemma \ref{lem:metricequiv} below.
 


To show CP: The only $n$-ary predicate on $\mathcal{K}$ is the $n$th diversity predicate $\delta^{(n)}(x_1,\ldots,x_n)= \delta(\{x_1,\ldots,x_n\})$, deleting repeated elements in the set listing.  Let $\bar a$, $\bar b$ be two diversities in $\mathcal{K}_n$. Suppose $d_{\mathcal{K}}(\bar a, \bar b) = \epsilon$. Let $(\bar a', \bar b')$ be an embedding of $\bar a$, $\bar b$ so that $\max_i (a_i',b_i') \leq 2 \epsilon$. By Lemma~\ref{lem:1-Lipschitz}, 
\[
|\delta^{(n)} (\bar a) - \delta^{(n)} (\bar b) | = |\delta'(\bar a')- \delta'(\bar b')| \leq 2 n \epsilon,
\]
showing that the map $\delta^{(n)} \colon \mathcal{K}_n \rightarrow \R$ is Lipschitz continuous.
\end{proof}


\begin{lem} \label{lem:metricequiv}
For all $n$ and $\bar{a}, \bar{b}$
\[
\frac{1}{n} d_\infty(\bar{a},\bar{b}) \leq d_{\mathcal{K}}(\bar{a},\bar{b}) \leq  d_\infty (\bar{a},\bar{b}).
\]
\end{lem}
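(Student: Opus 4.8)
The plan is to prove the two inequalities separately, establishing that $d_\infty$ and $d_{\mathcal{K}}$ are Lipschitz equivalent on $\mathcal{K}_n$. The upper bound $d_{\mathcal{K}}(\bar a, \bar b) \leq d_\infty(\bar a, \bar b)$ is the substantive direction and relies on the bounded amalgamation of Def.~\ref{defn:ma}; the lower bound $\frac{1}{n} d_\infty(\bar a, \bar b) \leq d_{\mathcal{K}}(\bar a, \bar b)$ is the easier Lipschitz-continuity direction, mirroring the analogous argument given in the PP and CP portions of the proof of Theorem~\ref{thm:stocprocFraisse}.

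For the lower bound, I would start from any joint embedding $(\bar a', \bar b')$ of $\bar a$ and $\bar b$ into a common diversity. For each index set $X \subseteq \{1,\ldots,n\}$ and each predicate $\delta^{(|X|)}$, I would bound $|\delta(\bar a'_X) - \delta(\bar b'_X)|$ by repeatedly applying Lemma~\ref{lem:1-Lipschitz} (the $1$-Lipschitz property of each diversity predicate), changing the arguments one coordinate at a time from $\bar a'_X$ to $\bar b'_X$. Each such change costs at most $d(a'_i, b'_i) \leq \max_i d(a'_i, b'_i)$, and there are at most $n$ coordinates to change, so the difference is at most $n \max_i d(a'_i, b'_i)$. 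Taking the maximum over all $X$ and predicates gives $d_\infty(\bar a, \bar b) \leq n \max_i d(a'_i, b'_i)$; then taking the infimum over all joint embeddings yields $\frac{1}{n} d_\infty(\bar a, \bar b) \leq d_{\mathcal{K}}(\bar a, \bar b)$.

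For the upper bound, I would use the amalgamation from Def.~\ref{defn:ma}, which produces a common extension of any two diversities agreeing on a common part. The key observation is already recorded in the bAP step of the proof of Theorem~\ref{thm:findivfrai}: for a one-point amalgamation $(\bar a, z_1, z_2)$ the induced distance satisfies $d(z_1, z_2) = \sup_{A \subseteq \bar a} |\delta(A \cup \{z_1\}) - \delta(A \cup \{z_2\})| \leq d_\infty((\bar a, z_1),(\bar a, z_2))$. To handle full $n$-tuples rather than a single new point, I would iterate: amalgamate $\bar a$ and $\bar b$ one coordinate at a time, at each stage pinning together $a_i$ and $b_i$ using Def.~\ref{defn:ma} and bounding the resulting distance $d(a_i, b_i)$ in the amalgam by $d_\infty(\bar a, \bar b)$. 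Since $d_{\mathcal{K}}$ is the infimum of $\max_i d(a_i, b_i)$ over joint embeddings, and this particular amalgam achieves $\max_i d(a_i, b_i) \leq d_\infty(\bar a, \bar b)$, the bound follows.

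The main obstacle I anticipate is the iteration in the upper bound: I must verify that after amalgamating to identify the proximity of several pairs $(a_i, b_i)$, the value of $d_\infty$ controlling the newly introduced distances does not grow, i.e.\ that the amalgamation step for one coordinate does not inflate the relevant predicate differences used to bound the next coordinate. Making this rigorous requires checking that the amalgamation of Def.~\ref{defn:ma} preserves the original diversity values on the common part (monotonicity was already established) and that the $d_\infty$ computed on the intermediate structures stays bounded by the original $d_\infty(\bar a, \bar b)$. An alternative that sidesteps the iteration would be to embed $\bar a$ and $\bar b$ directly into a single common diversity via a simultaneous construction and bound all pairwise distances at once using the explicit supremum formula; I expect whichever route is cleaner will be the crux of the argument, while the Lipschitz lower bound is essentially routine.
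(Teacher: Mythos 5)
Your proposal matches the paper's proof in both directions: the lower bound via the $1$-Lipschitz property of the diversity predicates (Lemma~\ref{lem:1-Lipschitz}), changing one coordinate at a time and taking the infimum over joint embeddings, and the upper bound by iterating the bounded amalgamation of Def.~\ref{defn:ma}, first identifying $a_1$ with $b_1$ and then pinning each pair $(a_i,b_i)$ in turn with $d(a_i,b_i)\leq d_\infty(\bar a,\bar b)$. The obstacle you flag---verifying that $d_\infty$ between the intermediate extended tuples does not grow as the amalgam accumulates points---is exactly the step the paper itself leaves implicit (``we repeat this step \dots and so forth''), so your proposal is essentially the published argument, with that verification candidly noted rather than suppressed.
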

\begin{proof}
The first inequality follows from the predicates being 1-Lipschitz. In particular,
suppose we have an embedding of $\bar a$ and $\bar b$ into a third diversity $((\bar a',\bar b'),\delta)$ so that $\max_i d( a'_i, b'_i) < d_{\mathcal{K}}(\bar{a},\bar{b}) + \epsilon$. For any set $X \subseteq \{1, \ldots, n\}$
\[
|\delta_a( \bar a_X) - \delta_b(\bar b_X)| =  |\delta( \bar a'_X) - \delta(\bar b'_X)| \leq 
\sum_{i=1}^n d( \bar a'_i, \bar b'_i ) \leq n \max_i d( \bar a'_i, \bar b'_i ) < n d_\mathcal{K}(\bar a, \bar b) +  n \epsilon,
\]
where  we have used Lemma~\ref{lem:1-Lipschitz}.
Taking the limit as $\epsilon \rightarrow 0$ and dividing by $n$ gives the first inequality.

To prove the second inequality we will repeatedly use the bAP for diversities. Suppose $d_\infty(\bar{a},\bar{b}) =\epsilon$. We will construct a joint embedding of $\bar{a}$ and $\bar{b}$ into a new diversity where corresponding points in the tuples are never further than $\epsilon$ away from each other.

First we identify $a_1$ and $b_1$. Consider the two tuples $(a_1,b_1,a_2)$ and $(a_1,b_1,b_2)$. Since $d_\infty(\bar{a},\bar{b})=\epsilon$, we know that $d_\infty((a_1,b_1,a_2),(a_1,b_1,b_2)) \leq \epsilon$, and so we can use bAP for diversities to find a joint embedding $(a_1,b_1,a_2,b_2)$ where $d(a_2,b_2) \leq \epsilon$. We repeat this step for the tuples $(a_1,b_1,a_2,b_2,a_3)$ and $(a_1,b_1,a_2,b_2,b_3)$, and so forth. In the end we have a joint embedding of all the points in $\bar{a}$ and $\bar{b}$ such that $d(a_i, b_i) \leq \epsilon$ for all $i= 1, \ldots, k$, as required.
\end{proof}

\section{$L_1$ Metrics  and $L_1$ Diversities}

Recall that $L_1(\Omega,\mathcal{A},\mu)$ is the set of $\mathcal{A}$-measurable functions $f$ defined on $\Omega$ with $\int_\Omega |f(\omega)| d\mu(\omega) < \infty$ equipped with the metric
\[
d(f,g)= \int_\Omega |f(\omega) - g(\omega)| d\mu(\omega).
\]
An \emph{$L_1$ metric space} is a metric space that can be embedded in $L_1(\Omega,\mathcal{A},\mu)$ for some measure space $(\Omega,\mathcal{A},\mu)$.  
There is a well-developed theory of $L_1$ metric spaces, including many alternative characterizations of them in the finite case \revision{\cite[Ch. 3 \& 4]{Deza97}}. In particular, a metric on a finite set $X$ is $L_1$ if and only if it can be written as a non-negative linear combination of \emph{cut semimetrics}, $d_{U|\bar{U}}$ \revision{\cite[Thm.\ 4.2.6]{Deza97}}. \revision{To explain, letting $\bar{U} = X \setminus U$, the cut semimetric 
$d_{U|\bar{U}}$ is defined by
\[
d_{U|\bar{U}}(x,y) = \left\{
\begin{array}{cl}
1, & \mbox{if } (x \in U \mbox{ and } y \in \bar{U}) \mbox{ or } (y \in U \mbox{ and } x \in \bar{U}),\\
0, & \mbox{otherwise}.
\end{array} \right.
\]
Then $d$ is an $L_1$ metric if and only if
\[
d(x,y)= \sum_{U \subseteq X} \lambda_U d_{U| \bar{U}} (x,y)
\]
for some $\lambda_U \geq 0$. }

\revision{$L_1$ diversities were introduced in \cite{Bryant14}. To define this class of diversities, we first define a  particular diversity function $\delta$ on $L_1(\Omega,\mathcal{A},\mu)$.
  For any finite set of functions $F$ in $L_1(\Omega,\mathcal{A},\mu)$
   we define the diversity of $F$ to be
   \begin{equation} \label{eqn:diversity_function}
\delta( F) = \int_{\Omega} \max_{f\in F} f(\omega) - \min_{g \in F} g(\omega) \,  d\omega.\end{equation}}
\revision{In \cite[p. 4]{Bryant14} we showed that this is a diversity, and if we restrict $F$ to only having two points this gives the $L_1$ metric as its induced metric. 
 Now we define 
an \emph{$L_1$ diversity} to be a diversity that can be embedded in $L_1(\Omega,\mathcal{A},\mu)$ with the diversity function $\delta$ given by \eqref{eqn:diversity_function}, for some measure space $(\Omega,\mathcal{A},\mu)$.
}

\revision{Analogous to cut semimetrics, for any partition $U|\bar{U}$ of a set $X$ there is a cut semidiversity given by \cite[p. 9]{Bryant14}
\[
\delta_{U|\bar{U}}(A) = \left\{
\begin{array}{cl}
1, & \mbox{if } (A \cap U \neq \emptyset) \mbox{ and } (A \cap \bar{U}  \neq \emptyset ),\\
0, & \mbox{otherwise}.
\end{array} \right.
\]
In \cite[Prop. 9]{Bryant14} we showed that 
a finite diversity $(X,\delta)$ is $L_1$ if and only if $\delta$ can be written as a non-negative linear combination of \emph{cut semidiversities}:
\[
\delta(A) = \sum_{U \subseteq X} \lambda_U \delta_{U|\bar{U}} (A),
\]
where all $\lambda_U \geq 0$.}
The induced metric of an $L_1$ diversity is $L_1$, and conversely,  every $L_1$ metric is the induced metric of some $L_1$ diversity. \revision{Both of these facts follow from the definition of both $L_1$ metrics and $L_1$ diversities in terms of embedding into $L_1(\Omega,\mathcal{A},\mu)$}.

In general, finite $L_1$ metrics do not have a unique decomposition in cut semimetrics \cite[\revision{Sec.\ 4.3}]{Deza97}; \revision{see \cite[Eqn.\ 9]{bryant2007linearly} for an example}. An attractive feature of $L_1$ diversities is that each $L_1$ diversity has a unique decomposition into cut semidiversities; \revision{see \cite[Prop. 10]{Bryant14} which is derived from  \cite[Thm.\ 3 \& 4]{BryantKlaere}}. \revision{Later} we show another nice feature of finite $L_1$ diversities: they satisfy the bounded Amalgamation Property (bAP) and in turn have an exact \Fraisse limit. Conversely, we show that the class of $L_1$ metric spaces does not even satisfy NAP, and hence does not have even an approximate \Fraisse limit.

\subsection{There is no Approximate \Fraisse limit for $L_1$ metrics}

We will prove that there is no \Fraisse limit (neither approximate nor exact) for finite $L_1$ metrics by showing that the class does not satisfy NAP. This can be done with a simple example.

We start by considering the simplest finite metric space that cannot be embedded in $L_1$. All metric spaces on four or fewer points can be embedded in $L_1$ \cite[p. 190]{witsenhausen1986minimum}. The following five-point metric space cannot:
\[
\begin{array}{l|ccccc} & a & b & c & z_1 & z_2\\ \hline
a & 0 & 2 & 2 & 1 & 1 \\ 
b  & & 0 & 2 & 1 & 1 \\
c  & & & 0 & 1 & 1 \\
z_1  & & & & 0 & 2 
\\
z_2  & & & & & 0
\end{array}
\]
(This is the shortest path metric in the complete bipartite graph $K_{2,3}$.)  The proof that this metric is not $L_1$ is to show that it does not satisfy the following pentagonal inequality \cite[Sec.\ 6.1]{Deza97}
\begin{equation} \label{eqn:pentagonal}
\sum_{x,y \in \{a,b,c\}} d(x,y) + \sum_{x,y \in \{z_1,z_2\}} d(x,y) - 2 \sum_{x \in \{a,b,c\}, y \in \{z_1,z_2\}} d(x,y) \leq 0.
\end{equation}

A natural way to approach finding a counter-example to the NAP is to choose two metric spaces with a common substructure that when amalgamated must yield this metric or another one violating the pentagonal inequality. Some experimentation gave the following pair of metric spaces
\[
\begin{array}{l|ccccc} & a & b & c & e & z_1\\ \hline
a & 0 & 2 & 2 & 2 & 1 \\ 
b & & 0 & 2 & 2 & 1 \\
c & & &  0 & 1 & 1 \\
e & & & & 0 & 1 \\
z_1 & & & & & 0 
\end{array}
\ \ \ \ \ \  \ \ \ \ \ 
\begin{array}{l|ccccc} & a & b & c & e & z_2\\ \hline
a & 0 & 2 & 2 & 2 & 1 \\ 
b & & 0 & 2 & 2 & 1 \\
c & & & 0 & 1 & 1 \\
e & & & & 0 & 2 \\
z_2 & & & &  & 0 
\end{array}
\]
Amalgamating these two metric spaces while identifying the common substructure on $\{a,b,c,e\}$ gives
\[
\begin{array}{l|cccccc} & a & b & c & e & z_1 & z_2\\ \hline
a & 0 & 2 & 2 & 2 & 1 & 1\\ 
b & & 0 & 2 & 2 & 1 & 1  \\
c & & & 0  & 1 & 1 & 1 \\
e & & & & 0 & 1 & 2 \\
z_1 & & & & & 0 & \gamma \\
z_2 & & & & & & 0
 \end{array}
\]
where $\gamma >0$ needs to be determined. For the amalgamation to be a valid metric space, we need $1 \leq \gamma \leq 2$. However, the pentagonal inequality is not satisfied for any value of $\gamma$ in this range. Evaluating \eqref{eqn:pentagonal} for this metric space gives  $2 \gamma \leq 0$, which cannot hold for any $\gamma$ in the range. So the class of all finite $L_1$ metrics does not satisfy AP. Hence, there is not a exact \Fraisse limit for finite $L_1$ metrics.

Is it still possible for finite $L_1$ metrics to have an approximate \Fraisse limit? To rule out this possibility we show that the set of finite $L_1$ metrics does not even satisfy NAP.  
The approximate amalgamation property requires us to embed the two metric spaces above in a common metric space, where instead of the images of $a,b,c$ being common, they just have to be arbitrarily close to each other. But the pentagonal inequality is a closed condition, and so any metric space arbitrarily close to our counter-example must also fail to satisfy it.
So NAP does not hold and there is no approximate \Fraisse limit for finite $L_1$ metric spaces.

\subsection{An Exact \Fraisse limit for $L_1$ diversities}

Finally, if we consider $L_1$ diversities, there is an exact \Fraisse limit, as we show here. Its induced metric is a universal $L_1$ metric, but is not ultrahomogeneous, by the results of the previous subsection.

\begin{thm} The class of finite $L_1$ diversities viewed as relational metric structures satisfies HP, JEP, bAP, PP, CP, and is therefore a \Fraisse class with a separable ultrahomogeneous limit. Furthermore, the limit is universal with respect to separable $L_1$ diversities, and is the unique such $L_1$ diversity.
\end{thm}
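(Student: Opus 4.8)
The plan is to verify the five properties HP, JEP, bAP, PP, CP and then invoke Theorem~\ref{thm:ultrahomoglimit}, following the proof of Theorem~\ref{thm:findivfrai} for unrestricted diversities; the only step needing genuinely new work is bAP, because the amalgamation of Definition~\ref{defn:ma} need not preserve $L_1$-ness. Throughout I would represent a finite $L_1$ diversity by its \emph{unique} decomposition into cut semidiversities $\delta = \sum_U \lambda_U \delta_{U\mid\bar U}$ with $\lambda_U \geq 0$, which exists and is unique by \cite[Prop.\ 9 \& 10]{Bryant14}. Two of the properties are then immediate. For HP, restricting an $L_1$ diversity to a subset of its points leaves it embeddable in $L_1(\Omega,\mathcal A,\mu)$ (equivalently, each cut $U\mid\bar U$ restricts to a cut of the subset and trivial restrictions drop out), so every finite subdiversity is again $L_1$. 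For CP, the only $n$-ary predicate is $\delta^{(n)}$, which is $1$-Lipschitz in each argument by Lemma~\ref{lem:1-Lipschitz} and hence Lipschitz continuous on $\mathcal K_n$, exactly as in Theorem~\ref{thm:findivfrai}.

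The heart of the argument is bAP. Let $(\bar a, w)$ and $(\bar a, z)$ be $L_1$ diversities agreeing on $\bar a$. Parametrising each cut of $\bar a\cup\{w\}$ by the side $P\subseteq\bar a$ not containing $w$, write $\delta_{(\bar a,w)} = \sum_{\emptyset\neq P\subseteq\bar a}\lambda_P\,\delta_{(P\mid(\bar a\setminus P)\cup\{w\})}$ and likewise $\delta_{(\bar a,z)} = \sum_{\emptyset\neq P\subseteq\bar a}\mu_P\,\delta_{(P\mid(\bar a\setminus P)\cup\{z\})}$. Restricting each to $\bar a$ gives two cut decompositions of the common diversity on $\bar a$, so by uniqueness $\lambda_P=\mu_P$ for every proper $\emptyset\neq P\subsetneq\bar a$, while the isolating weights satisfy $\lambda_{\bar a}=\delta(\bar a\cup\{w\})-\delta(\bar a)$ and $\mu_{\bar a}=\delta(\bar a\cup\{z\})-\delta(\bar a)$ (total cut weight minus the weight surviving on $\bar a$). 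I would then define an amalgam on $\bar a\cup\{w,z\}$ by keeping every common core cut with both $w$ and $z$ placed on the $(\bar a\setminus P)$ side, placing weight $\min(\lambda_{\bar a},\mu_{\bar a})$ on the cut $(\{w,z\}\mid\bar a)$, and placing the residual weight $|\lambda_{\bar a}-\mu_{\bar a}|$ on the cut isolating whichever of $w,z$ has the larger isolating weight. This is a non-negative combination of cut semidiversities, hence $L_1$, and a direct check shows it restricts to $(\bar a,w)$ and to $(\bar a,z)$. Since the residual cut is the only one separating $w$ from $z$,
\[
d(w,z) = |\lambda_{\bar a}-\mu_{\bar a}| = |\delta(\bar a\cup\{w\})-\delta(\bar a\cup\{z\})| \leq d_\infty\!\left((\bar a,w),(\bar a,z)\right),
\]
so bAP holds with constant $c=1$ (and when $d(w,z)=0$ we simply identify $w$ and $z$).

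JEP then follows from AP, which bAP implies, by amalgamating any two finite $L_1$ diversities over a single shared point; the amalgam stays in the class because AP for $\mathcal K$ delivers $\mathfrak D\in\mathcal K$. For PP I would follow Theorem~\ref{thm:findivfrai} and Lemma~\ref{lem:metricequiv}: the $L_1$ diversities on $n$ points are the image of the non-negative orthant under the linear cut map, i.e.\ a finitely generated, hence closed, polyhedral cone in $\mathbb R^k$, which is separable and complete under the Euclidean metric. It remains to show $d_{\mathcal K}$ is Lipschitz equivalent to $d_\infty$ (equivalently to the Euclidean metric). The lower bound $\tfrac1n d_\infty\leq d_{\mathcal K}$ is exactly the $1$-Lipschitz argument of Lemma~\ref{lem:metricequiv} and uses nothing about $L_1$. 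For the upper bound $d_{\mathcal K}\leq d_\infty$ I would run the same repeated-amalgamation construction as in Lemma~\ref{lem:metricequiv}, except that every one-point amalgamation is taken to be the $L_1$-preserving amalgam constructed above; since $L_1$ diversities satisfy bAP and hence AP with amalgams in the class, all intermediate structures remain $L_1$, so the resulting joint embedding is $L_1$ and corresponding points stay within $d_\infty$.

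With all five properties verified, Theorem~\ref{thm:ultrahomoglimit} gives that the class is a \Fraisse class whose limit is separable, ultrahomogeneous, unique up to isomorphism, and universal for separable $L_1$ diversities. The main obstacle is precisely the bAP construction: there is a tension between keeping the amalgam $L_1$ and keeping $d(w,z)$ small. The crude ``isolate $w$ and $z$ on opposite sides'' amalgam is $L_1$ but yields $d(w,z)=\delta(\bar a\cup\{w\})+\delta(\bar a\cup\{z\})-2\delta(\bar a)$, which is far too large, whereas the minimal amalgam of Definition~\ref{defn:ma} controls $d(w,z)$ but carries no guarantee of $L_1$-ness; merging the two isolating cuts, so that only their weight-difference separates $w$ from $z$, is what reconciles the two requirements.
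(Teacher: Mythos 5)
Your verification of HP, JEP, CP, and the reduction to Theorem~\ref{thm:ultrahomoglimit} all match the paper, but the bAP step --- which you correctly identify as the heart of the matter --- contains a genuine error. The claim that uniqueness of the cut decomposition forces $\lambda_P = \mu_P$ for every proper nonempty $P \subsetneq \bar a$ is false. In your parametrization by the side avoiding $w$, the two \emph{distinct} cuts $P \mid (\bar a\setminus P)\cup\{w\}$ and $(\bar a\setminus P)\mid P\cup\{w\}$ of $\bar a\cup\{w\}$ restrict to the \emph{same} cut $P\mid \bar a\setminus P$ of $\bar a$, so uniqueness of the decomposition on $\bar a$ only pins down the sums $\lambda_P + \lambda_{\bar a\setminus P} = \mu_P + \mu_{\bar a\setminus P}$: the new point may sit on either side of each base cut, and how the mass is distributed between the two placements is not determined by the restriction. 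Concretely, take $\bar a = \{x,y\}$, let $(\bar a, w)$ have weight $2$ on the cut $x \mid yw$ and $\epsilon$ on $xy \mid w$, and let $(\bar a, z)$ have weight $2$ on $y \mid xz$ and $\epsilon$ on $xy \mid z$. Both restrict to the same diversity on $\bar a$ (weight $2$ on $x\mid y$), and $\delta(\bar a\cup\{w\}) = \delta(\bar a\cup\{z\}) = 2+\epsilon$, so your formula $d(w,z) = |\lambda_{\bar a}-\mu_{\bar a}|$ yields $0$ and you would identify $w$ with $z$; but $d(x,w) = 2+\epsilon$ while $d(x,z) = \epsilon$, so every amalgam has $d(w,z) \geq 2$. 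In this example your ``common core cuts'' are not common at all, and the promised direct restriction check fails; the constant $c=1$ is unsupported.

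The paper's proof is built precisely around this obstruction. It fixes a basepoint $a \in A$ and indexes every split by its side avoiding $a$; then each base split $U$ lifts to the two splits $U$ and $U\cup\{z_1\}$ of $B$ with weights satisfying $\beta_U + \beta_{U\cup\{z_1\}} = \alpha_U$ (similarly $\gamma_U + \gamma_{U\cup\{z_2\}} = \alpha_U$), and the individual weights genuinely may disagree between the two extensions. The discrepancy is absorbed not by merging isolating cuts but by a coupling matrix $M^{(U)}$ with marginals $\beta$ and $\gamma$ --- the same device as the Coupling Lemma in the stochastic-process section --- chosen so that $d(z_1,z_2) = \sum_{U} |\beta_U - \gamma_U|$, which is then bounded via an inclusion--exclusion formula expressing $\beta_U$ and $\gamma_U$ in terms of $\delta$-values, giving the constant $2^{2n}$ rather than $1$. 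Your PP argument inherits the same correction (the paper proves $d_{\mathcal{K}}(\bar a,\bar b) \leq 2^{2n} d_\infty(\bar a,\bar b)$ by the repeated one-point amalgamation you describe), and Lipschitz equivalence, hence PP, survives with the larger constant; but as written your bAP construction does not produce a valid amalgam, so the proof has a genuine gap at its central step.
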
 
\begin{proof}
HP. \revision{As in the general diversity case, taking a subset of points cannot violate any diversity axioms. Furthermore, if a diversity can be embedded in $L_1(\Omega,\mathcal{A},\mu)$ so can any subdiversity of it.}

JEP. This follows from bAP if we identify a single point in each of the finite $L_1$ diversities.

bAP.
Suppose that $(A,\delta_A)$ embeds into $(B,\delta_B)$ and into $(C,\delta_C)$. Suppose that all three diversities are $L_1$-embeddable. We need to show that $(B,\delta_B)$ and $(C,\delta_C)$ can be simultaneously embedded into a $L_1$ diversity $(D,\delta_D)$.

We assume that $B\setminus A$ is disjoint from $C\setminus A$. To help index the splits, we fix $a \in A$. Then the three diversities can be written as
\begin{align*}
\delta_A &= \sum_{U \subseteq A \setminus \{a\}, U \not = \emptyset} \alpha_{U}  \delta_{U|(A \setminus U)}\\
\delta_B &= \sum_{V \subseteq B \setminus \{a\}, V \not = \emptyset} \beta_{V}  \delta_{V|(B \setminus V)}\\
\delta_C &= \sum_{W \subseteq C \setminus \{a\}, W \not = \emptyset} \gamma_{W}  \delta_{W|(C \setminus W)}
\end{align*}
where $\alpha_U, \beta_V, \gamma_W$ are all non-negative.

We know that the three diversities all agree on subsets of $A$:
\[
\delta_A = \delta_B |_A = \delta_C |_A.
\]
$L_1$ diversities on $A$ are uniquely expressed as a weighted sum of splits of $A$. We need to figure out how to write $\delta_B|_A$ and $\delta_C |_A$ as a weighted sum of splits of $A$.
But for each split of $A$, there are many corresponding splits of $B$ (or $C$) that have the same effect on subsets of $A$. So we can break down $\delta_B|_A$ as 
\[
\delta_B = \sum_{U \subseteq A \setminus \{a\} , U \neq \emptyset} \left[ \sum_{V  \subseteq B \colon V \cap A=U} \beta_V \delta_{V | (B\setminus V)} \right],
\]
and a similar expression holds for $\delta_C$.
For each split $U \subseteq A$, $a \not \in U$, $U \not =\emptyset$, we then have 
\[\sum_{V \subseteq B: V \cap A = U } \beta_{V}  = \sum_{W \subseteq C: W \cap A = U } \gamma_{W} = \alpha_{U}.\]

We will now define the amalgamated $L_1$ diversity.
Let $M^{(U)}$ be a matrix with rows indexed by elements of $\{V \subseteq B: V \cap A = U\}$; columns indexed by elements of $\{W \subseteq C: W \cap A = U\}$; such that for all $V$
\[\sum_{W \subseteq C: W \cap A = U } M^{(U)}_{VW} = \beta_{V} \]
and for all $W$,
\[\sum_{V \subseteq B: V \cap A = U } M^{(U)}_{VW} = \gamma_W.\]
Any such matrix provides an amalgamation
 $\delta_D$ given by
\[\delta_D = \sum_{U \subseteq A \setminus \{a\}, U \neq \emptyset} \sum_{V \subseteq B: V \cap A = U } \sum_{W \subseteq C: W \cap A = U } M^{(U)}_{VW} \delta_{V \cup W | ((B \cup C) \setminus (V \cup W))}.\]


We need to find a choice of $M^{(U)}$ such that bAP holds. 
 We just consider two-point amalgamation; the general case follows by induction.
We let $B=A \cup\{z_1\}$ and $C = A \cup \{z_2\}$.
Now $U$ runs over all nonempty subsets of $A$ not containing $\{a\}$. But for each $U$,
$V$ just takes the values $U$ and  $U \cup \{z_1\}$ 
and 
$W$ just takes the values $U$ and  $U \cup \{z_2\}$. 

Now our amalgamated diversity simplifies to (only writing the one half of the splits in the split notation, i.e.\ $U$ for $U |( (A \cup \{z_1,z_2\})\setminus U$)
\[
\delta_D = \sum_{U \subseteq A \setminus \{a\}} 
M^{(U)}_{U,U} \delta_{U}  
+
M^{(U)}_{U\cup \{z_1\},U} \delta_{U \cup \{z_1\}} 
+
 M^{(U)}_{U,U \cup \{z_2\}} \delta_{U \cup \{z_2\}}  
+
  M^{(U)}_{U \cup\{z_1\},U\cup \{z_2\}}\delta_{U \cup \{z_1,z_2\}} 
\]
If we only want to know the value of the diversity on $\{z_1,z_2\}$ then it simplifies to 
\begin{equation} \label{eqn:twopointL1}
\delta_D(\{z_1,z_2\})   =   \sum_{U \subseteq A \setminus \{a\}} 
M^{(U)}_{U\cup\{z_1\},U} 
+
M^{(U )}_{U,U \cup \{z_2\}}
\end{equation}

We choose the entries of $M^{(U)}$ analogously to how we chose to amalgamate stochastic processes in Section~\ref{sec:stochproc}.
We let
\begin{eqnarray*}
M_{U,U}^{(U)} & = & \min(\beta_U,\gamma_U) \\
M_{U\cup\{z_1\},U\cup\{z_2\}}^{(U)} & = & \min( \beta_{U \cup \{z_1\}}, \gamma_{U \cup \{z_2\}}) \\
M_{U\cup\{z_1\},U}^{(U)} & = &
 \gamma_U - \min( \beta_U, \gamma_U )
 \\
M_{U,U\cup\{z_2\}}^{(U)} & = &  
\beta_U  - \min( \beta_{U} , \gamma_{U})
\end{eqnarray*}
for each $U \subseteq A, a \not \in U$. 

Plugging these choices into \eqref{eqn:twopointL1} gives
\begin{eqnarray*}
\delta_D(\{z_1,z_2\}) = \sum_{U \subseteq A \setminus \{a\}} \beta_U + \gamma_U -  2 \min( \beta_U, \gamma_U) =   \sum_{U \subseteq A \setminus \{a\}}  |\beta_U- \gamma_U|.
\end{eqnarray*}
We now have to bound this expression  in terms of differences of $\delta_B(U \cup \{z_1\})$ and $\delta_C(U \cup \{z_2\})$. To do this we need to express $\beta_U$ and $\gamma_U$ in terms of the diversities evaluated on set. 
\revision{Equation (7) in \cite[p. 12]{Bryant14} gives} the expression for the weights when $\lambda = \bar{\lambda}$. We are only taking splits $U|\bar{U}$ where $U$ doesn't contain $A$. So we have to double the weight in that paper.
We get, for all $U \subseteq A$, $a \not \in U$:
\[
\beta_U =  \sum_{V: U\subseteq V, V \subseteq A} (-1)^{|U|-|V|+1} (\delta(V)- \delta(V \cup\{z_1\}))
\]
\[
\gamma_U = \sum_{V: U\subseteq V, V \subseteq A} (-1)^{|U|-|V|+1} (\delta(V)- \delta(V \cup\{z_2\}))
\]
%
This gives us
\begin{eqnarray*}
\delta_D(\{z_1,z_2\}) & = & \sum_{U \subseteq A \setminus \{a\}} \left| \sum_{V: U\subseteq V, V \subseteq A} (-1)^{|U|-|V|+1} (\delta(V \cup \{z_1\})- \delta(V \cup\{z_2\})) \right| \\
& \leq & \sum_{U \subseteq A \setminus \{a\}} \sum_{V: U\subseteq V, V \subseteq A} \left|\delta(V \cup \{z_1\})- \delta(V \cup\{z_2\}) \right| \\
& \leq & 2^{2n} \max_{V \subseteq A} \left|\delta(V \cup \{z_1\})- \delta(V \cup\{z_2\}) \right|
\end{eqnarray*}
as required.

PP.  We follow the exact same argument as in Theorem~\ref{thm:findivfrai}, \revision{noting that $L_1$ diversities also can be viewed as a closed subset of $\mathbb{R}^k$ for large enough $k$.}
\revision{We only need to establish the analogue of Lemma~\ref{lem:metricequiv} for $L_1$ diversities. The inequality $d_\infty(\bar a,\bar b) \leq n d_{\mathcal{K}}(\bar a,\bar b)$ follows exactly as in Lemma~\ref{lem:metricequiv}. To get a bound on $d_\mathcal{K}(\bar a,\bar b)$ in terms of $d_\infty(\bar a,\bar b)$ we use bAP for $L_1$ diversities repeatedly. }

\revision{First, suppose that $d_\infty(\bar a,\bar b)=\epsilon$. We will construct a joint  embedding of $\bar a$ and $\bar b$ into a new $L_1$ diversity where corresponding points in the tuples are never further than $2^{2n} \epsilon$ from each other. } 

\revision{First we identify $a_1$ and $b_1$. Consider the two tuples $(a_1,b_1,a_2)$, $(a_1,b_1,b_2)$. Since $d_\infty(\bar a,\bar b)=\epsilon$, we know that $d_\infty((a_1,b_1,a_2),(a_1,b_1,b_2)) \leq \epsilon$, and so be can use bAP for $L_1$ diversities to obtain a joint embedding $(a_1,b_1,a_2,b_2)$ where $d(a_2,b_2)\leq 2^{2n} \epsilon$. We repeat this step for the tuples $(a_1,b_1,a_2,b_2,a_3)$ and $(a_1,b_1,a_2,b_2,b_3)$, and so forth. In the end we have a joint embedding of all the points in $\bar{a}$ and $\bar{b}$ such that $d(a_i, b_i) \leq 2^{2n} \epsilon$ for all $i= 1, \ldots, k$.
   }
 \revision{  Thus we have obtained
\[
\frac{1}{n} d_\infty(\bar{a},\bar{b}) \leq d_{\mathcal{K}}(\bar{a},\bar{b}) \leq  2^{2n} d_\infty (\bar{a},\bar{b}),
\]
as required.}

CP. This follows exactly like the proof of the property CP in Theorem~\ref{thm:findivfrai}, the restriction to $L_1$ diversities not making any difference.

\end{proof}

{\bf Acknowledgment.}  DB was supported by a University of Otago research grant. AN was supported by the Marsden fund of New Zealand. PT was supported by an NSERC Discovery Grant and a Tier 2 Canada Research Chair.

\bibliographystyle{asl}

\end{document}